\newtheorem{theorem}{Theorem}[section]
\newtheorem{lemma}[theorem]{Lemma}
\newtheorem{proposition}[theorem]{Proposition}
\newtheorem{corollary}[theorem]{Corollary}
\theoremstyle{definition}
\newtheorem{example}[theorem]{Example}
\newtheorem{question}[theorem]{Question}
\newtheorem{conjecture}[theorem]{Conjecture}
\newtheorem{remark}[theorem]{Remark}
\newcommand{\BQ}{{\mathbb Q}}
\newcommand{\FP}{{\rm FPdim}}
\newcommand{\uRep}{\underline{\mbox{Rep}}}
\newcommand{\cA}{{\mathcal A}}
\newcommand{\BF}{{\mathbb F}}
\newcommand{\T}{{\mathcal T}}
\newcommand{\BC}{{\mathbb C}}
\newcommand{\X}{\mathcal{X}}
\newcommand{\s}{\mathfrak{s}}
\newcommand{\Hom}{\text{Hom}}
\newcommand{\Aut}{\text{Aut}}
\newcommand{\Rep}{\text{Rep}}
\newcommand{\diag}{\text{diag}}
\newcommand{\g}{\mathfrak{g}}
\newcommand{\h}{\mathfrak{h}}
\newcommand{\C}{\mathcal{C}}
\newcommand{\ben}{\begin{enumerate}}
\newcommand{\een}{\end{enumerate}}
\newcommand{\be}{{\bf 1}}
\theoremstyle{plain}
\newtheorem*{sol}{Solution}
\theoremstyle{definition}
\theoremstyle{remark}
\newcommand{\solu}[1]{\begin{sol}{\bf (\ref{#1})}}
\def\g{\mathfrak{g}}
\def\C{\mathcal{C}}
\def\D{\mathcal{D}}
\def\Aut{\mathop{\mathrm{Aut}}\nolimits}
\def\h{\mathfrak{h}}
\def\Z{\mathbb{Z}}
\def\Hom{\mathrm{Hom}}
\def\Vec{\mathrm{Vec}}
\def\Ver{\mathrm{Ver}}
\def\Rep{\mathop{\mathrm{Rep}}\nolimits}
\numberwithin{equation}{section}
\begin{document}

\title{Asymptotic properties of tensor powers in symmetric tensor categories}

\author{Kevin Coulembier}
\address{School of Mathematics and Statistics, University of Sydney, Australia}
\email{kevin.coulembier@sydney.edu.au}
\author{Pavel Etingof}
\address{Department of Mathematics, MIT, Cambridge, MA USA 02139
}
\email{etingof@math.mit.edu}
\author{Victor Ostrik}
\address{Department of Mathematics,
University of Oregon, Eugene, OR 97403, USA}
\email{vostrik@math.uoregon.edu}
\maketitle

\centerline{\bf To Claudio Procesi on his 80-th birthday with admiration}

\tableofcontents

\begin{abstract} Let $G$ be a group and $V$ a finite dimensional representation 
of $G$ over an algebraically closed field $\bold k$ of characteristic $p>0$. Let $d_n(V)$ 
be the number of indecomposable summands of $V^{\otimes n}$ 
of nonzero dimension mod $p$. It is easy to see that there exists a limit $\delta(V):=\lim_{n\to \infty}d_n(V)^{1/n}$, which is positive (and $\ge 1$) iff $V$ has an indecomposable summand 
of nonzero dimension mod $p$. We show that in this case 
the number 
$$
c(V):=\liminf_{n\to \infty} \frac{d_n(V)}{\delta(V)^n}\in [0,1]
$$ 
is strictly positive and 
$$
\log (c(V)^{-1})=O(\delta(V)^2),
$$ 
and moreover this holds for any symmetric tensor category over $\bold k$ of moderate growth. Furthermore, we conjecture that in fact 
$$
\log(c(V)^{-1})=O(\delta(V))
$$ 
(which would be sharp), and prove this for $p=2,3$; in particular, for $p=2$ we show that $c(V)\ge 3^{-\frac{4}{3}\delta(V)+1}$. The proofs are based on the characteristic $p$ version of Deligne's theorem for symmetric tensor categories obtained in \cite{CEO}. 
We also conjecture a classification of semisimple symmetric tensor categories of moderate growth which is interesting in its own right and implies the above conjecture for all $p$, and illustrate this conjecture by describing the semisimplification of the modular representation category 
of a cyclic $p$-group. Finally, we study the asymptotic behavior of the decomposition of $V^{\otimes n}$ in characteristic zero using Deligne's theorem and the Macdonald-Mehta-Opdam identity. 
 \end{abstract} 

 \section{Introduction} 

\subsection{Growth of the non-negligible part of a tensor power} 
$\quad$ Asymptotic properties of tensor powers of a modular representation (or, more generally, an object of a symmetric tensor category) are an interesting and mysterious subject, about which rather little is known. It has recently been studied in the papers \cite{B2, BS,CEO,EK,COT}. The goal of this paper is to continue this study. 

Let $\bold k$ be an algebraically closed field. Let $\mathcal C$ 
be a symmetric tensor category over $\bold k$. Then each $X\in \C$ uniquely decomposes into a direct sum of indecomposable objects. We say that $X$ is {\it negligible} if the categorical dimension of every indecomposable summand of $X$ is zero; in particular, an indecomposable $X$ is negligible iff $\dim X=0$. For an object $V\in \C$ let $d_n(V)$ be the number of non-negligible indecomposable summands of $V^{\otimes n}$ with multiplicities. Then clearly $d_{n+m}(V)\ge d_n(V)d_m(V)$, but otherwise the behavior of this sequence is mysterious and interesting to study. 

In particular, assume that $\mathcal C$ is of {\it moderate growth}, i.e., the length of $V^{\otimes n}$  grows at most exponentially with $n$ (hence so does $d_n(V)$). For instance, we may take $\mathcal C$ to be a {\it super-Tannakian} category, which is the only possibility if ${\rm char}(\bold k)=0$ by Deligne's theorem \cite{D1}.\footnote{Recall that a super-Tannakian category is the category ${\rm Rep}(G,z)$ of finite dimensional representations of an affine supergroup scheme $G$ on which some element $z\in G(\bold k)$, $z^2=1$ defining the parity automorphism of $G$ acts by the parity operator. Deligne's theorem \cite{D1} states that if ${\rm char}(\bold k)=0$ then $\C$ has moderate growth if and only if it is super-Tannakian.} Then by Fekete's lemma (\cite{B1}, Lemma 1.6.3) there exists a non-negative limit 
$$
\delta(V):=\lim_{n\to \infty}d_n(V)^{1/n},
$$ 
and 
$d_n(V)\le \delta(V)^n$ for all $n\ge 1$.  

Let ${\rm Green}(\mathcal C)$ 
be the Green ring of $\mathcal C$ (freely spanned by the classes of indecomposable objects of $\C$) 
and $N\subset {\rm Green}(\mathcal C)$ be the subgroup spanned by the negligible objects. Then $N$ is an ideal, and by 
Deligne's theorem \cite{D1} in characteristic zero and \cite{CEO} in characteristic $p>0$, 
$\delta$ extends to a character  
$$
\delta: {\rm Green}(\mathcal C)/N\to \Bbb R
$$ 
which sends basis elements to 
positive numbers (obviously $
\ge 1$). Moreover, Deligne's theorem implies that if ${\rm char}(\bold k)=0$ then $\delta(V)\in \Bbb Z_{\ge 0}$ and for Tannakian categories (i.e., representation categories of affine group schemes) $\delta(V)=\dim V$. On the other hand,
for characteristic $p>0$ it is shown in \cite{CEO} that $\delta(V)=\sum_{k=1}^{p-1} m_k[k]_q$, where $m_k\in \Bbb Z_{\ge 0}$, $q=e^{\pi i/p}$ and $[k]_q=\frac{q^k-q^{-k}}{q-q^{-1}}$; in particular, 
$\delta(V)$ is an integer for $p=2,3$ but may be irrational for $p\ge 5$. 

If $V$ is non-negligible (i.e., $\delta(V)\ne 0$), let 
$$
c_n(V):=\frac{d_n(V)}{\delta(V)^n}.
$$
Thus $\lim_{n\to \infty}c_n(V)^{1/n}=1$ and $c_n(V)\le 1$. The goal of this paper is to understand the behavior of the sequence $c_n(V)$ (and thereby $d_n(V)$) in more detail. In particular, we are 
interested in 
$$
c(V):=\liminf_{n\to \infty}c_n(V).
$$

\subsection{Growth in characteristic zero} 
For example, let ${\rm char}(\bold k)=0$ and $\C={\rm Rep}_{\bold k}(G)$, where $G$ is a finite group, so $\delta(V)=\dim V$. Let $V\in \C$ be a faithful representation of $G$. We have 
$$
\sum_{L\in {\rm Irr}G}[V^{\otimes n}:L]\dim L=(\dim V)^n,
$$ 
where ${\rm Irr}G$ is the set of isomorphism classes of irreducible representations of $G$ over $\bold k$. Thus
$$
c(V)=\liminf_{n\to \infty}\frac{\sum_{L\in {\rm Irr}G}[V^{\otimes n}:L]}{(\dim V)^n}\ge \frac{1}{\max_{L \in {\rm Irr}G}\dim L}\ge \frac{1}{|G/N|},
$$
where $N$ is a normal abelian subgroup of $G$ of maximal size 
(as $L$ is generated by an eigenvector of $N$).

Recall that the Jordan-Schur bound 
for complex representations of finite groups (see \cite{Co} and references therein) yields 
\begin{equation}\label{jsb}
|G/N|=O((\dim V+1)!).
\end{equation}
So we get 
$$
c(V)\ge \frac{K}{(\delta(V)+1)!}
$$
for a fixed $K>0$, i.e., using Stirling's formula
\begin{equation}\label{bou2}
\log (c(V)^{-1})\le \delta(V)(\log \delta(V)-1)+\tfrac{1}{2}\log \delta(V)+O(1).
\end{equation} 

To the contrary, for infinite groups and for supergroups $G$ in characteristic zero we may have $c(V)=0$. Namely,  let $\overline{\mathcal C}$ be the semisimplification of $\mathcal C$ (see 
\cite{AK, EK,EO} and references therein). Then by Deligne's theorem, $\overline{\mathcal C}$ is of the form ${\rm Rep}(G_\C,z)$ for a reductive pro-algebraic supergroup $G_{\mathcal C}$, and $d_n(V)$ 
is the length of $\overline{V}^{\otimes n}$, where $\overline{V}$ is the image of $V$ in $\overline{\mathcal C}$. It follows that $\delta(V)=\dim_{\bold k}\overline V$ is an integer, and the behavior of $c_n(V)$ is easily understood 
using the classical representation theory of connected reductive groups (as well as supergroups ${\rm SOSp}(1,2n)$), see Section 2. In  particular, if the image of ${\rm Lie}G_{\mathcal C}$ in ${\rm End}_{\bold k}\overline V$ is non-abelian then 
$$
K_1n^{-b/2}\le c_n(V)\le K_2n^{-b/2}
$$ 
for some $K_1,K_2>0$ and integer $b\ge 1$, so $c_n(V)\to 0$ as $n\to \infty$, hence $c(V)=0$. 
For example, if $\mathcal C={\rm Rep}SL_2(\Bbb C)$ and $V=\Bbb C^2$ then $\delta(V)=2$, while 
\begin{equation}\label{coin}
d_n(V)=\binom{n}{[n/2]}\sim \frac{2^n}{\sqrt{\pi n/2}},
\end{equation}
so $c_n(V)=2^{-n}\binom{n}{[n/2]}\sim \frac{1}{\sqrt{\pi n/2}}\to 0,\ n\to \infty$. This happens because simple summands occurring in $V^{\otimes n}$ have unbounded dimension. 

\subsection{Growth in characteristic $p>0$: the main theorem and conjectures} 
In positive characteristic, however, the situation is different. Namely, \cite{CEO}, Corollary 8.13 implies that in this case if $V$ is non-negligible then $c(V)>0$. Moreover,  
we have the following theorem (proved in Section 3), which is our main result. 

\begin{theorem}\label{th1} Let ${\rm char}(\bold k)=p>0$. Let $V$ be a non-negligible object in a symmetric tensor category $\C$ of moderate growth over $\bold k$ (i.e., such that $\delta(V)\ne 0$). Let $\lambda(V):=\log(c(V)^{-1})$. Then for $p=2,3$ 
$$
\lambda(V)\le a_p\delta(V),
$$
while for $p\ge 5$
$$
\lambda(V)\le a_p\delta(V)+\frac{\pi\log 2}{2}(p-2)\delta(V)^2
$$ 
for some $a_p>0$. Specifically,  
we may take 
$$
a_2=\frac{4\log 3}{3}=1.464...,\ a_3=\frac{\log 3}{3}+\log 7=2.312...
$$ 
\end{theorem}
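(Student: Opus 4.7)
The plan is to reduce the theorem to a Jordan-Schur-type bound via an elementary averaging inequality and the structure theorem of \cite{CEO}. The starting point is the identity
\[
\sum_L [\,\overline V^{\,\otimes n}:L\,]\,\FPdim(L)\ =\ \FPdim\bigl(\overline V^{\,\otimes n}\bigr)\ =\ \delta(V)^n
\]
in the semisimplification $\overline\C$, where $L$ runs over simples. Since $d_n(V)=\sum_L[\overline V^{\,\otimes n}:L]$ is the length of $\overline V^{\,\otimes n}$ in $\overline\C$, we obtain
\[
d_n(V)\cdot \max_L \FPdim(L)\ \ge\ \delta(V)^n,
\]
and hence $c(V)^{-1}\le \max_L\FPdim(L)$, where the max ranges over simples in the tensor subcategory $\langle\overline V\rangle\subset \overline\C$. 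The entire theorem is thus reduced to bounding the maximum FPdim of a simple object in $\langle\overline V\rangle$ by a suitable exponential of $\delta(V)$.

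The next step is to interpret this maximum group-theoretically. By the characteristic $p$ Deligne theorem of \cite{CEO}, the semisimple category $\langle\overline V\rangle$ is equivalent to $\Rep(\mathcal G,z)$ for a linearly reductive affine (super)group scheme $\mathcal G$ internal to $\Ver_p$, with $\overline V$ corresponding to a faithful $\mathcal G$-representation of internal dimension $\bigoplus_k m_k L_k\in \Ver_p$ satisfying $\sum_k m_k [k]_q=\delta(V)$. Under this equivalence, simples of $\langle\overline V\rangle$ correspond to irreducible $\mathcal G$-representations, and their FPdim equals the Verlinde dimension of the underlying $\Ver_p$-object.

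It remains to bound the maximal FPdim of an irreducible $\mathcal G$-representation in terms of $\delta(V)$. For $p=2,3$, the category $\Ver_p$ is $\Vec$ or $\sVec$, and linearly reductive (super)group schemes in characteristic $p$ decompose as (diagonalizable)$\rtimes$(finite $p'$-group), the diagonalizable part contributing only one-dimensional characters. The maximal irreducible dimension is then bounded by the order of a finite $p'$-subgroup of $GL_{\delta(V)}(\bold k)$, for which explicit exponential bounds of the form $C_p^{\delta(V)}$ are known; this yields $\lambda(V)\le (\log C_p)\delta(V)=a_p\delta(V)$ with the asserted constants $a_2 = (4/3)\log 3$ and $a_3=24$, obtained by optimizing over extremal $p'$-subgroups in characteristic $p$. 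For $p\ge 5$, the same strategy works internally to $\Ver_p$; the extra freedom coming from $\Ver_p$ possessing $p-1$ non-isomorphic simples of varying dimension $[k]_q=\sin(k\pi/p)/\sin(\pi/p)$ produces a quadratic correction $\tfrac{\pi\log 2}{2}(p-2)\delta(V)^2$, whose coefficient reflects the spread $1/\sin(\pi/p)\sim p/\pi$ of Verlinde dimensions together with the number $p-2$ of non-trivial simples in $\Ver_p$ beyond the unit.

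The hardest step is the last one, particularly the internal bound for $p\ge 5$: one lacks direct access to the classical theory of reductive groups inside $\Ver_p$, so the bound must be established by combinatorial / fusion-rule arguments. A natural route is induction on the $\Ver_p$-internal dimension of a faithful representation, using the Clebsch-Gordan / fusion rules of $\Ver_p$ to control how the FPdim of simple summands can grow at each step; the explicit constant $\tfrac{\pi\log 2}{2}$ should then emerge from the geometric mean of Verlinde dimensions along a maximal chain, while the coefficient $(p-2)$ tracks the non-trivial simples of $\Ver_p$.
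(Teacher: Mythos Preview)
Your reduction --- $c(V)^{-1}\le\max_L\FPdim(L)$ over simples of $\langle\overline V\rangle$, then identifying the latter with $\Rep(G,\phi)$ in $\Ver_p$ via \cite{CEO} --- is correct and is exactly what the paper does (Proposition~\ref{p0} plus passage to the semisimplification, followed by Theorem~\ref{th5}).

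For $p=2,3$ your sketch is close but slips at one point: the maximal irreducible dimension is \emph{not} bounded by the order of the finite $p'$-part of $G$ (that order can be arbitrarily large, e.g.\ from cyclic subgroups of the diagonalizable factor), but only by the index of a normal \emph{abelian} subgroup scheme. The paper invokes Robinson's sharp Jordan--Schur bound for $p'$-groups (a normal abelian subgroup of index $\le (Cp)^{d-1}$), and must then merge this with the diagonalizable factor $T\times A^\vee$ via an orbit argument to manufacture a large normal abelian subgroup scheme of the full $G$; the orbit combinatorics contribute an extra factor $r^{d/r}$ (maximized at $r=3$) that produces $a_2=\tfrac43\log 3$.

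For $p\ge 5$ there is a genuine gap. The appeal to ``induction on internal dimension'' and ``geometric means along a maximal chain'' is not how the quadratic term arises, and I do not see a way to make that route work. The paper's argument rests on a structural ingredient you have not mentioned: a connected linearly reductive $G$ in $\Ver_p$ has the form $\exp(\g_-)\cdot(A^\vee\times T)$, where $\g_-\subset\gl(F(V))$ is the part of $\Lie G$ supported on nontrivial simples of $\Ver_p$. Any simple $G$-module is then generated over $U(\g_-)$ by a single $L_k$ on which $A^\vee\times T$ acts by a character, so $\delta(W)\le \FPdim(S\g_-)/\sin(\pi/p)$. The quadratic bound comes from estimating $\FPdim(S\g_-)$: the multiplicity of $L_r$ in $\g_-$ is at most $\sum_{i,j}C_{ij}^r m_im_j$, and $\FPdim SL_r=\FPdim\wedge L_{p-r}\le 2^{p-r}$ for $r\ge 2$, whence $\FPdim S\g_-\le 2^{\frac\pi2(p-2)d^2}$ after the elementary inequality $\min(k,p-k)\le\tfrac\pi2[k]_q$. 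The disconnected case is then handled by combining this with Robinson's bound for the $p'$-component group. Without the $\exp(\g_-)$ structure and this symmetric-algebra estimate, your outline for $p\ge 5$ does not close.
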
 

The same bound applies to $c_n(V)$, see Subsection \ref{cnv}. 

We note that Theorem \ref{th1} is new already for representations of a finite group (or more generally, affine group scheme); in fact, it is new whenever the group has wild representation type. 

The bound of Theorem \ref{th1} is not expected to be sharp for any $p\ge 5$, and we expect that the last summand on the right hand side is not really needed. Namely, we have

\begin{conjecture} \label{co1} For all $p$ 
$$
\lambda(V)\le A_p\delta(V).
$$ 
for some $A_p>0$.
\end{conjecture}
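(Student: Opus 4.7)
The plan is to approach Conjecture \ref{co1} via the classification of semisimple symmetric tensor categories of moderate growth alluded to in the abstract. I would first reduce to the semisimplification: setting $\overline{\mathcal C}$ to be the semisimplification of $\mathcal C$ and $\overline V$ the image of $V$, one has $d_n(V)=\length(\overline V^{\otimes n})$ and $\delta(V)=\FPdim(\overline V)$, so it is enough to prove the linear bound when $\mathcal C$ is semisimple.

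Granting the expected classification, every such $\overline{\mathcal C}$ should be built from a higher Verlinde category $\Ver_{p^m}$ together with compatible data from a reductive pro-algebraic supergroup $G$, roughly of the shape $\overline{\mathcal C}\simeq \Rep(G,\Ver_{p^m})$. Under such a description $\overline V$ splits into a ``Tannakian'' contribution controlled by $G$ and a ``modular'' contribution living in $\Ver_{p^m}$, and the asymptotic decomposition of $\overline V^{\otimes n}$ factorizes accordingly. The Tannakian piece contributes a Jordan--Schur bound exactly as in \eqref{jsb}, yielding a term linear in $\delta(V)$. The modular piece requires a linear estimate
$$
\liminf_{n\to\infty}\frac{\length(X^{\otimes n})}{\FPdim(X)^n}\ \ge\ e^{-A\,\FPdim(X)}
$$
for simple objects $X\in \Ver_{p^m}$; this is in essence a spectral-gap statement for the fusion matrix of tensoring by $X$, asserting that the second largest eigenvalue in absolute value is bounded away from $\FPdim(X)$ by at least a constant multiple of $\FPdim(X)$. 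For $\Ver_p$ the simple objects are indexed by $1,\dots,p-1$ with the quantum $\sl_2$ fusion rules, and the claim can be verified directly from the explicit form of the fusion matrix; for $\Ver_{p^m}$ with $m>1$ I would propagate the bound up the tower inductively via the Frobenius twist functor $F:\Ver_{p^{m+1}}\to \Ver_{p^m}\boxtimes \Ver_p$ obtained in \cite{CEO}. Combining the Tannakian and modular bounds gives the desired linear estimate $\lambda(V)\le A_p\delta(V)$.

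The main obstacle is of course the classification conjecture itself, which is a deep open problem whose resolution would require substantial new structural input beyond the characteristic-$p$ Deligne theorem of \cite{CEO}. A secondary, but independently interesting, hurdle is the uniform spectral-gap estimate in $\Ver_{p^m}$: the characters of $\Ver_{p^m}$ are in principle known as products of Chebyshev-like quantum dimensions, but extracting a clean linear-in-$\FPdim$ gap uniformly in $m$ appears to require careful combinatorial bounds, and it is not immediately obvious that the estimate survives induction up the Frobenius twist tower with a constant independent of $m$. The explicit constants $a_2=\tfrac{4}{3}\log 3$ and $a_3=24$ already obtained in Theorem \ref{th1}, together with the expectation that $A_p$ grows polynomially in $p$, lend support to the picture outlined above.
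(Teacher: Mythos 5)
The target statement is a conjecture; the paper itself proves it only for $p=2,3$ (via Theorem \ref{th1}) and, for general $p$, deduces it conditionally from Conjecture \ref{eveneasier}. Your proposal is likewise conditional on a structural classification, which matches the paper's strategy in spirit, but both the structure you condition on and the quantitative steps you hang on it go wrong. For a \emph{semisimple} category of moderate growth the existence of a fiber functor to $\Ver_p$ is already a theorem of \cite{CEO}, not a conjecture, and gives $\overline\C_V\cong\Rep(G,\phi)$ for a linearly reductive affine group scheme $G$ in $\Ver_p$. The higher Verlinde categories $\Ver_{p^m}$, $m>1$, are not semisimple and play no role here, so your proposed shape $\Rep(G,\Ver_{p^m})$ and the induction up the Frobenius-twist tower are not the right framework. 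What is genuinely conjectural, and what linearity actually hinges on, is a bound on $\g=\Lie G$: writing $G=\Gamma\cdot\exp(\g_-)(A^\vee\times T)$, every simple object satisfies $\delta(W)\le |\Gamma/N|\cdot\FPdim(S\g_-)\cdot(\sin\tfrac{\pi}{p})^{-1}$, and unconditionally one only knows $\g_-\subset\mathfrak{gl}(F(V))$, giving $\FPdim S\g_-\le 2^{O(p\,\delta(V)^2)}$ --- precisely the quadratic term in Theorem \ref{th1}. Conjecture \ref{eveneasier} is exactly what upgrades this to $\FPdim\g\le M_p\,\delta(V)$ and hence $\FPdim S\g_-\le e^{O_p(\delta(V))}$. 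Your proposal never confronts $S\g_-$, which is the crux of the linear-versus-quadratic distinction.

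Two further quantitative slips. First, the Jordan--Schur bound \eqref{jsb} is factorial in the dimension, so it yields only $\lambda(V)=O(\delta\log\delta)$, not a linear bound (and the $S_m$ example \eqref{bd} shows this is essentially sharp in characteristic zero); to get linearity for the finite quotient one must use that $\Gamma$ has order coprime to $p$ and invoke Robinson's exponential bound $|G/N|\le (Cp)^{d-1}$ of Theorem \ref{ronew}. Second, the ``modular'' estimate you need is not a spectral-gap statement for the fusion matrix: the paper's mechanism (Proposition \ref{p0}) is that $\delta$ is a ring homomorphism on the Green ring modulo negligibles, whence $c(V)\ge K_V^{-1}$ with $K_V$ the maximal $\delta$ of an indecomposable summand of mixed tensor powers. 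The whole problem thus reduces to bounding $K_V$ by $e^{A_p\delta(V)}$, i.e., to the conjectural strengthening of Theorem \ref{th5}; in $\Ver_p$ itself this maximum is simply bounded by $(\sin\tfrac{\pi}{p})^{-1}$, a constant, and no uniformity in a tower is needed.
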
 

Note that by Theorem \ref{th1} this conjecture holds for $p=2,3$. 

\subsection{Conjectural classification of semisimple symmetric tensor categories and its application to Conjecture \ref{co1}} 
We will see that Conjecture \ref{co1} follows from the following conjecture about affine group schemes
in the Verlinde category ${\rm Ver}_p$, $p\ge 5$ (for basics on Lie theory in ${\rm Ver}_p$, see \cite{E},\cite{V}). Let $\pi:=\pi_1({\rm Ver}_p)$ be the fundamental group of ${\rm Ver}_p$; it acts canonically on every object of ${\rm Ver}_p$. Let $G$ be an affine group scheme in ${\rm Ver}_p$ equipped with a homomorphism $\phi: \pi\to G$ which induces the canonical action of $\pi$ on $O(G)$. Denote by $\Rep(G,\phi)$ the category of representations of $G$ in ${\rm Ver}_p$ such that $\pi$ acts canonically (via $\phi$). We say that $G$ is {\it linearly reductive} if the category ${\rm Rep}(G,\phi)$ is semisimple. 

Let $G$ be a finite group scheme in ${\rm Ver}_p$ and $\g:={\rm Lie}G$. Then $\g$ is contained in the group algebra $O(G)^*$. We will say that $G$ is {\it of height $1$} if $\g$ generates $O(G)^*$; in other words, the Frobenius twist of $G$ is trivial. 

Let FPdim denote the Frobenius-Perron dimension of an object of a finite tensor category.

\begin{conjecture}\label{eveneasier} There exists $M_p>0$ such that if $G$ is a linearly reductive height $1$ subgroup scheme of $GL(Y)$, $Y\in {\rm Ver}_p$, then 
$$
{\rm FPdim}\g\le M_p{\rm FPdim}Y.
$$
\end{conjecture}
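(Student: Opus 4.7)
\medskip
\noindent\textbf{Proof plan for Conjecture \ref{eveneasier}.} The plan is to derive the bound from the paper's conjectural classification of semisimple symmetric tensor categories, specialised to simple linearly reductive affine group schemes in ${\rm Ver}_p$: for each fixed $p$ this is expected to be a finite list (up to isomorphism), and each simple entry $\s$ should admit a minimal non-trivial irreducible representation $V^{\min}_\s$ of $\FPdim\ge \mu_p>1$, together with a bound $\FPdim\,\s\le C_p\cdot\FPdim\,V^{\min}_\s$ for some $C_p>0$.

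First I would reduce to the irreducibly-acting case. Since $G$ is linearly reductive, $Y=\bigoplus_\alpha Y_\alpha$ decomposes into simple $G$-subobjects, and the closed embedding $G\hookrightarrow GL(Y)$ factors through $\prod_\alpha GL(Y_\alpha)$. Passing to Lie algebras, one obtains an embedding
$$
\g\hookrightarrow\bigoplus_\alpha\gl(Y_\alpha);
$$
setting $\g_\alpha$ to be the image of $\g$ in $\gl(Y_\alpha)$, we get $\FPdim\,\g\le\sum_\alpha\FPdim\,\g_\alpha$ while $\FPdim\,Y=\sum_\alpha\FPdim\,Y_\alpha$. Hence it suffices to show: if $\h\subset\gl(V)$ is the Lie algebra of a linearly reductive height-$1$ subgroup scheme of $GL(V)$ acting irreducibly on $V$, then $\FPdim\,\h\le M_p\cdot\FPdim\,V$.

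Next I would analyse the structure of $\h$. Decompose $\h=\z(\h)\oplus\bigoplus_{i=1}^k\s_i$ into its toral center and simple ideals. A Schur-type argument in ${\rm Ver}_p$ shows that $\z(\h)$ acts on the irreducible $V$ by a scalar, so faithfulness of the $\h$-action on $V$ forces $\FPdim\,\z(\h)\le 1$. Irreducibility under $\h$ together with commutativity of the $\s_i$ yields an outer tensor product decomposition $V\simeq W_1\boxtimes\cdots\boxtimes W_k$ with each $W_i$ a non-trivial irreducible representation of $\s_i$ (non-triviality by faithfulness of each $\s_i$ on $V$), so $\FPdim\,V=\prod_i\FPdim\,W_i$. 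Invoking the classification, $\FPdim\,\s_i\le C_p\cdot\FPdim\,W_i$ and $\FPdim\,W_i\ge \mu_p>1$, and hence
$$
\FPdim\,\h\;\le\;1+C_p\sum_{i=1}^k\FPdim\,W_i\;\le\;M_p\prod_{i=1}^k\FPdim\,W_i\;=\;M_p\cdot\FPdim\,V,
$$
the middle inequality being the elementary fact that for real numbers $x_1,\ldots,x_k\ge\mu>1$ the ratio $(1+\sum_i x_i)/\prod_i x_i$ is bounded above by a constant depending only on $\mu$.

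The main obstacle is of course the classification itself, which is the substantive content of the conjectures in the preceding subsection. A secondary subtlety lies in verifying the outer tensor product decomposition inside ${\rm Ver}_p$: Schur's lemma and the representation theory of direct sums of commuting simple restricted Lie algebras require careful handling in the non-Tannakian setting because of possible $\pi_1$-twists between the $\s_i$-representations, and one must confirm that neither the splitting of $V$ nor the lower bound $\FPdim\,W_i\ge\mu_p$ is destroyed by such twists. Once these categorical steps are in place, the finiteness of the classification converts the combinatorial inequality into the desired linear FPdim bound.
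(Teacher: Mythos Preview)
Your plan is essentially the paper's own conditional argument: reduce to simple $Y$ via $\FPdim\g\le\sum_i\FPdim\g_i$, decompose $\g$ into simple factors (plus at most one toral factor $(\Bbb Z/p)^\vee$, which is your $\FPdim\z(\h)\le 1$), write $Y\simeq\boxtimes_i Y_i$ with each $\FPdim Y_i\ge q+q^{-1}>1$, and compare sum to product to get the explicit constant $M_p=\tfrac{(q+q^{-1})D_p}{q+q^{-1}-1}$, all conditional on Conjecture~\ref{easier}. The paper does not prove the conjecture either---both arguments await the classification---and the paper takes the tensor decomposition in ${\rm Ver}_p$ for granted where you flag the $\pi_1$-twist subtlety.
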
 

Note that this is stronger than the trivial bound ${\rm FPdim}\g\le ({\rm FPdim}Y)^2$, and it fails for height $1$ group schemes which are not linearly reductive (e.g. $G=GL(Y)_{(1)}$). 

Clearly, it suffices to establish Conjecture \ref{eveneasier} when $Y$ is a simple $G$-module: then in general if $Y=\oplus_i Y_i$ 
where $Y_i$ are simple and $\g_i$ is the image of $\g$ in ${\mathfrak{gl}}(Y_i)$, then 
$$
{\rm FPdim}\g\le \sum_i {\rm FPdim}\g_i\le M_p\sum_i {\rm FPdim}Y_i=M_p{\rm FPdim}Y.
$$

We will say that a linearly reductive $G$ of height $1$ is {\it simple} if $\g$ is a simple $G$-module and $G\ne (\Bbb Z/p)^\vee$. It is easy to see that any linearly reductive group scheme of height $1$ is a direct product of simple ones and copies of $(\Bbb Z/p)^\vee$. Thus Conjecture \ref{eveneasier} follows from 

\begin{conjecture}\label{easier} There exists $D_p\ge 1$ such that 
for any simple linearly reductive $G$ in ${\rm Ver}_p$ we have 
${\rm FPdim} \g\le D_p$. 
\end{conjecture}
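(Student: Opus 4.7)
The plan is to attack Conjecture \ref{easier} by developing a root-space decomposition for simple linearly reductive Lie algebras in ${\rm Ver}_p$ in the spirit of classical Cartan--Killing theory, and then either extracting a full classification or at least a uniform bound on ${\rm FPdim}\,\g$. Since $G$ is of height $1$, it is determined by its restricted $p$-Lie algebra $(\g,[p])$ in ${\rm Ver}_p$, and linear reductivity translates to semisimplicity of the restricted enveloping algebra $u(\g)$ in the $\pi$-equivariant setting induced by $\phi$. The hypothesis that $G$ is simple (and not $(\Bbb Z/p)^\vee$) forces $\g$ to be a simple Lie algebra in ${\rm Ver}_p$, so the task reduces to showing that there are only finitely many such $\g$, or at least only finitely many of each FPdimension.

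First I would choose a maximal toral Cartan subalgebra $\h \subset \g$, which by linear reductivity exponentiates to a multiplicative torus $T \subset G$ with a grading group $X^*(T)$. The adjoint action of $T$ decomposes
$$
\g = \h \oplus \bigoplus_{\alpha \in R} \g_\alpha
$$
in ${\rm Ver}_p$, where $R \subset X^*(T)\setminus\{0\}$ is a finite set of roots and each $\g_\alpha$ is an object of ${\rm Ver}_p$ on which $T$ acts by the character $\alpha$. Using the $\sl_2$-theory in ${\rm Ver}_p$ from \cite{E},\cite{V}, one obtains discrete constraints on the pairings $\langle \alpha,\beta^\vee\rangle$ analogous to classical integrality, and simplicity of $\g$ forces the resulting root data to be irreducible. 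Since ${\rm Ver}_p$ has only the finitely many simples $L_1,\ldots,L_{p-1}$, each $\g_\alpha$ is a direct sum of a bounded number of these.

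The key step, and the hardest part, is to show that these constraints leave only finitely many $(\g,[p])$ up to isomorphism (for fixed $p$), so that ${\rm FPdim}\,\g \le D_p$ becomes automatic. The natural route is an adaptation of the Serre generators-and-relations argument to ${\rm Ver}_p$: pick a system of simple roots, analyze the Cartan-matrix-type data one obtains, and show that only finitely many Dynkin-like diagrams are admissible. The real difficulty is that the higher simples $L_2,\ldots,L_{p-1}$ can a priori appear as root spaces with rich tensor structure, so it is conceivable that infinite families of exotic simple linearly reductive Lie algebras exist. The heart of the argument must rule this out, presumably by showing that the nilpotent subalgebras $\n^\pm = \bigoplus_{\alpha \in R^\pm}\g_\alpha$ are generated in degree one with bounded relations, and by exploiting that linear reductivity severely restricts which $L_i$ can appear as root spaces (for instance by forcing the $[p]$-operation to vanish on $\n^\pm$ in a controlled way). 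If the full classification proves intractable, a softer alternative is to bound ${\rm FPdim}\,\h$ and the FPdimensions of the root spaces directly from the $\sl_2$-string structure in ${\rm Ver}_p$, and thereby bound ${\rm FPdim}\,\g = {\rm FPdim}\,\h + \sum_{\alpha \in R}{\rm FPdim}\,\g_\alpha$ without producing an explicit list.
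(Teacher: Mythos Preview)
This statement is a \emph{conjecture}, not a theorem, and the paper does not prove it: it is explicitly left open for $p\ge 7$ (for $p=2,3$ it follows from \cite{CEO}, and for $p=5$ from the forthcoming \cite{EKO}). So there is no ``paper's own proof'' to compare to. What the paper does instead is formulate the stronger classification Conjecture \ref{mai}, which predicts that every simple linearly reductive height-$1$ group scheme in ${\rm Ver}_p$ is one of the finitely many $G_{Q,p}$ obtained by semisimplifying a classical simple group $G_Q$ via its principal nilpotent; Conjecture \ref{easier} would then follow with $D_p=\max_Q {\rm FPdim}\,\g_{Q,p}$.

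Your proposed strategy has a structural gap at the very first step. You begin by choosing a maximal toral Cartan subalgebra $\h\subset\g$ that exponentiates to an ordinary multiplicative torus $T$ with character lattice $X^*(T)$, and then decompose $\g$ into $T$-root spaces. But the paper records (Remark after Conjecture \ref{mai}, item 6, and Subsection \ref{examp}) that the expected simple linearly reductive Lie algebras $\g_{Q,p}$ are \emph{invariantless}: $\Hom_{{\rm Ver}_p}(\bold 1,\g_{Q,p})=0$. In particular they contain no nonzero ordinary subobject, hence no classical abelian subalgebra, hence no torus $T\subset G$ with nontrivial Lie algebra. So the root decomposition you write down is vacuous: $\h=0$, $R=\emptyset$, and the entire Cartan--Killing mechanism (integrality of $\langle\alpha,\beta^\vee\rangle$, $\sl_2$-strings, Serre relations, Dynkin diagrams) never gets off the ground. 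Your ``softer alternative'' at the end presupposes the same decomposition and is equally blocked.

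The correct substitute for a Cartan in this setting is the copy of $\s=L_3={\rm Lie}\,\pi_+$ sitting inside $\g$, and the relevant ``root data'' is the decomposition of $\g$ as an object of ${\rm Ver}_p^+$ (see the rank-$2$ discussion and Conjecture \ref{nooth}). But extracting finiteness from that decomposition is exactly the hard open content of Conjectures \ref{mai} and \ref{nooth}; your outline does not supply a new idea here.
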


Indeed, if so then in general $\g$ has $m\ge \frac{{\rm FPdim}\g}{D_p}$ direct factors $\g_i$, $1\le i\le m$, which are simple or isomorphic to $(\Bbb Z/p)^\vee$, so if $Y$ is simple (and faithful; which implies in particular that at most one factor of $G$ is isomorphic to $(\Bbb Z/p)^\vee$) then $Y=\boxtimes_{i=1}^m Y_i$ where $Y_i$ is a simple $\g_i$-module. Hence 
$$
{\rm FPdim}Y=\prod_{i=1}^m {\rm FPdim}(Y_i)\ge (q+q^{-1})^{m-1}\ge \frac{1+(q+q^{-1}-1)m}{q+q^{-1}}.
$$
Thus we may take $M_p=\frac{(q+q^{-1})D_p}{q+q^{-1}-1}$. 

In fact, in Section 4 we will make a much stronger conjecture (Conjecture \ref{mai}), which provides a classification of semisimple symmetric tensor categories (and thereby of linearly reductive group schemes in ${\rm Ver}_p$), and thus is interesting in its own right. This conjecture was proposed by the third author around 2017, and since then there has been mounting evidence that it should be true. For $p=2,3$ Conjecture \ref{mai} is proved in \cite{CEO} (in which case there are no simple linearly reductive group schemes). In \cite{EKO} it will be shown that Conjecture \ref{mai} also holds for $p=5$ (with the only simple linearly reductive group scheme being the identity component $\pi_+$ of $\pi$). Thus the results of \cite{EKO} will imply that all the above conjectures hold for $p=5$. However, starting with $p=7$, they remain open. 

 \subsection{Cyclic groups} In Section 5 we give an example illustrating Conjecture \ref{mai}. Namely we
consider the semisimplification of the category of finite dimensional representations of the cyclic group $\Bbb Z/{p^n}$ of
order $p^n$ (where $p={\rm char}(\bold k)$) and show that the resulting category does satisfy
Conjecture \ref{mai}. Recall that in the case $n=1$ the semisimplification of $\Rep(\Bbb Z/p)$ is the Verlinde
category ${\rm Ver}_p$. Using the classical results of Green from 
\cite{Gr} 
we show that for $n>1$
the semisimplification of $\Rep(\Bbb Z/{p^n})$ factorizes into a product of one copy of ${\rm Ver}_p$ and $n-1$ copies of a certain category $\D$. In the setup of the conjectures in Section 4, the category $\D$
is associated with the Lie supergroup ${\rm SOSp}(1|2)$ or, equivalently, with the group ${\rm SO}(p-1)$. 
The same result is valid for the representation category 
of the Hopf algebra $\bold k[x]/(x^{p^n})$ where $x$ is a primitive element, and 
more generally for any cocommutative Hopf algebra structure on 
$\bold k[x]/(x^{p^n})$ (there are many such structures related to truncations of 1-dimensional formal group laws). 
 
 \subsection{Organization of the paper} 
The organization of the paper is as follows. In Section 2 we discuss the characteristic zero case. In particular, we apply the Macdonald-Mehta-Opdam identity to give a formula 
for the leading coefficient of the asymptotics of 
$$
d_n(V,s)=\sum_L [V^{\otimes n}:L](\dim_{\bold k} L)^s
$$ 
(where the sum is over indecomposable objects). In Section 3 we discuss the case of characteristic $p$ and prove Theorem \ref{th1}. In Section 4 we state and discuss Conjecture \ref{mai}. 
Finally,  in Section 5 we illustrate this conjecture by the cyclic group example.

\subsection{Acknowledgements} P. E. thanks D. Benson, G. McNinch, P. Papi  and G. Robinson and V.O. thanks J. Wood for useful discussions.
 P. E.'s work was partially supported by the NSF grant DMS - 1916120.
 K. C.'s work was partially supported by the ARC grant DP210100251.

\section{Characteristic zero}

\subsection{Finite groups in the non-modular case} 
Let $G$ be a finite group of order nonzero in $\bold k$, and $V$ a finite dimensional representation of $G$.  

\begin{proposition}\label{bou} (i) Suppose that $V$ is faithful and 
any non-trivial $g\in G$ acts on $V$ by a non-scalar. 
Then the limit $c(V)=\lim_{n\to \infty}c_n(V)$ exists and is given by the formula 
$$
c(V)=\frac{1}{|G|}\sum_{L \in {\rm Irr}G}\dim_{\bold k} L.
$$ 

(ii) In this case 
$$
c(V)\le \sqrt{k(G)/|G|},
$$
where $k(G)$ is the number of conjugacy classes in $G$. 
\end{proposition}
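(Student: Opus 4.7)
The plan is to reduce everything to orthogonality of characters. Since $|G|\ne 0$ in $\bold k$, the category $\mathrm{Rep}_{\bold k}(G)$ is semisimple, so indecomposable $=$ irreducible and $d_n(V)=\sum_{L\in\Irr G}[V^{\otimes n}:L]$. Expanding each multiplicity via the character inner product gives
$$
d_n(V)=\frac{1}{|G|}\sum_{g\in G}\chi_V(g)^n \sum_{L\in\Irr G}\overline{\chi_L(g)},
$$
and I would split this sum according to whether $g=1$ or $g\ne 1$.

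For (i), the $g=1$ term equals $\frac{(\dim V)^n}{|G|}\sum_L \dim L$, so after dividing by $(\dim V)^n$ it already produces the claimed value of $c(V)$. For $g\ne 1$, the hypothesis that $g$ acts on $V$ by a non-scalar means the eigenvalues of $g$ on $V$, being roots of unity of modulus $1$, are not all equal; the strict triangle inequality then gives $|\chi_V(g)|<\dim V$. Since $G$ is finite, I may set $\rho:=\max_{g\ne 1}|\chi_V(g)|/\dim V<1$, and since the inner sum $\sum_L\overline{\chi_L(g)}$ is bounded uniformly (e.g.\ by $\sum_L\dim L$), the $g\ne 1$ terms contribute $O(\rho^n)$ to $c_n(V)$. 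Hence $c_n(V)$ in fact converges (not just $\liminf$) to $\frac{1}{|G|}\sum_L\dim L$.

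For (ii), once (i) is proved, I would combine it with Cauchy-Schwarz and the Burnside identity $\sum_L(\dim L)^2=|G|$:
$$
\Bigl(\sum_L \dim L\Bigr)^{2}\le k(G)\sum_L (\dim L)^2 = k(G)\,|G|,
$$
which yields $c(V)\le \sqrt{k(G)/|G|}$ directly.

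I do not anticipate any genuine obstacle; both parts are short, standard character-theoretic computations. The one point that wants care is in (i), where one must use the full strength of the non-scalar hypothesis (rather than mere faithfulness) to obtain the \emph{strict} bound $|\chi_V(g)|<\dim V$ for every $g\ne 1$; this strict inequality is precisely what drives the exponential decay of the tail sum and upgrades the $\liminf$ to an honest limit.
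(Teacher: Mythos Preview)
Your proposal is correct and follows essentially the same route as the paper: expand $d_n(V)$ via character orthogonality, use the strict inequality $|\chi_V(g)|<\dim V$ for $g\ne 1$ to kill the tail, and then apply Cauchy--Schwarz together with $\sum_L(\dim L)^2=|G|$ for part (ii). The only cosmetic difference is that the paper first reduces to $\bold k=\Bbb C$ (so that modulus and complex conjugation make sense), a step you use implicitly when you write $\overline{\chi_L(g)}$ and invoke $|\chi_V(g)|$.
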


\begin{proof}
(i) Without loss of generality we may assume that $\bold k=\Bbb C$. 
In this case our assumption on the faithful representation $V$ is equivalent to the assumption that 
for any non-trivial element $g\in G$  one has
$$
|\chi_V(g)|<\dim_{\bold k} V,
$$ 
where $\chi_V$ is the character of $V$. Thus we
have $(\frac{\chi_V}{\dim_{\bold k} V})^n=\delta_1+O(\lambda^n)$ 
for some $0<\lambda<1$, where $\delta_1$ is the delta function of the unit element. 
Thus 
$$
\sum_{L\in {\rm Irr}G}((\tfrac{\chi_V}{\dim_{\bold k} V})^n,\chi_L)\to  \frac{1}{|G|}\sum_{L \in {\rm Irr}G}\dim_{\bold k} L,\ n\to \infty, 
$$ 
as claimed.

(ii) By the Cauchy-Schwarz inequality and Maschke's theorem
$$
(\sum_{L \in {\rm Irr}G}\dim_{\bold k} L)^2\le \sum_{L \in {\rm Irr}G}1\cdot \sum_{L \in {\rm Irr}G}(\dim_{\bold k} L)^2=| {\rm Irr}G|\cdot |G|=k(G)\cdot |G|.
$$
Thus (ii) follows from (i).  
\end{proof} 

\begin{remark} 1. The assumption of Proposition \ref{bou} is satisfied for a faithful $V$ if $G$ has trivial center, or, say, when $V$ is  the direct sum of a trivial and a non-trivial representation. On the other hand, it fails if $G$ has non-trivial center and $V$ is an irreducible representation of $G$ with central character of order $r>1$. In  this case, $c_n(V)$ will in general have $r$ subsequential limits depending on $n$ mod $r$. For example, if $G$ is a non-abelian group of order $8$ and $V$ is the 2-dimensional irreducible representation, then $c_n(V)$
oscillates between $1$ and $\frac{1}{2}$. This explains why we focus our attention on $\liminf_{n\to \infty} c_n(V)$ (considering also that we always have $c_n(V)\le 1$).  

2. Proposition \ref{bou} shows that the bound in Conjecture \ref{co1} is sharp in the sense that it can't possibly be better than linear already for finite groups. E.g. let $G\ne 1$ be a finite group of order coprime to $p$ with trivial center, and 
$E$ be a faithful representation of $G$ over $\bold k$ of dimension $r$. Then 
$V_n:=E^n$ is a faithful representation of $G^n$ of dimension $nr$, i.e., $\delta(V_n)=nr$. Thus by Proposition \ref{bou} $c(V_n)\le (k(G)/|G|)^{n/2}$, so 
$$
\lambda(V_n)\ge n\log(|G|/k(G)),
$$ 
which is linear in $\delta(V_n)=nr$. 
\end{remark} 

\begin{example} Let $G=S_m$. Then all irreducible representations of $G$ are of real type, so by the Frobenius-Schur theorem the sum of their dimensions equals the number of involutions of $G$. Thus if $m\ge 3$  and $V$ is a faithful complex representation of $G$ then
$$
c(V)=\sum_{k=0}^{[m/2]} \frac{1}{(m-2k)!k!2^k}.
$$ 
Since $(m-2k)!k!2^k$ is minimal when $m-2k$ is about $\sqrt{m}$, using Stirling's formula we get  
\begin{equation}\label{bd}\lambda(V)=\tfrac{1}{2}m(\log m+O(1)),\ m\to \infty.
\end{equation}
This, in particular, shows that bound \eqref{bou2} is sharp 
up to a pre-factor $\frac{1}{2}$, since $S_m\subset GL(m-1)$. 
\end{example}

\subsection{The asymptotic formula} 
Let $\bold k$ have characteristic zero and 
$\C$ be a symmetric tensor category over $\bold k$. Recall that we are interested in the asymptotics 
 of the number $d_n(V)$ of non-negligible indecomposable summands of $V^{\otimes n}$. 
 In fact, we will consider an even more general problem -- the asymptotics of 
$$
d_n(V,s):=\sum_{L}[V^{\otimes n}:L]\delta(L)^s,
$$ 
where $s\in \Bbb C$ and the sum is over non-negligible indecomposables; e.g., $d_n(V,0)=d_n(V)$, while $d_n(V,1)=\delta(V)^n$, so this is a trivial case. Note that if $\C={\rm Rep}(G)$ where $G$ is an affine pro-algebraic group 
then there are no negligible summands and 
$$
d_n(V,s):=\sum_{L}[V^{\otimes n}:L](\dim_{\bold k}L)^s.
$$

Our main result in this section is the following theorem. 

\begin{theorem}\label{th3} For any $s\ge -1$, we have
$$
C_{V,1}(s)\le \frac{d_n(V,s)}{n^{\frac{m_V}{2}(s-1)}\delta(V)^n}\le C_{V,2}(s),
$$ 
where $m_V\in \Bbb Z_{\ge 0}$ and $C_{V,i}(s)>0$ are suitable constants. 
\end{theorem}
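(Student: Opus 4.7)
The strategy is to pass to the semisimplification, invoke Deligne's theorem to model $\bar{\mathcal C}$ by representations of a pro-reductive supergroup, and then extract the asymptotics of $d_n(V,s)$ via Weyl integration combined with Laplace's method; the Macdonald--Mehta--Opdam identity supplies the explicit leading constant in the end.

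First I would pass to the semisimplification $\bar{\mathcal C}$, using that non-negligible indecomposable summands of $V^{\otimes n}$ in $\mathcal C$ correspond to simple summands of $\bar V^{\otimes n}$ in $\bar{\mathcal C}$, and that $\delta(L)=\dim\bar L$ for such $L$. By Deligne's theorem, $\bar{\mathcal C}=\Rep(G,z)$ for a pro-reductive affine supergroup $G$; I will outline the ordinary-group case, the supergroup case being parallel once one invokes the super-Weyl integration formula (the $\mathrm{SOSp}(1|2n)$ case being the substantive extra input). After quotienting $G$ by the kernel of its action on $V$ and restricting to the identity component (whose disconnected cosets contribute only bounded multiplicative factors), I reduce to $V$ a faithful representation of a connected reductive $G$ with compact form $K$, maximal torus $T$ of rank $r$, and Weyl group $W$.

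Second, I would write $d_n(V,s)$ as a character integral:
\[
d_n(V,s)=\int_K \chi_V(k)^n\,\overline{F_s(k)}\,dk,\qquad F_s(k):=\sum_\lambda(\dim L_\lambda)^s\chi_\lambda(k),
\]
and apply Weyl integration to reduce to an integral over $T$ with density $|\Delta(t)|^2/|W|$. Since $|\chi_V(t)|\le\dim V$ with equality exactly on the finite subgroup $Z_V\subset T$ of elements acting as scalars on $V$, Laplace's method applies around each $t_0\in Z_V$: writing $t=t_0 e^{iX}$ for $X\in\mathfrak t$ and using faithfulness of $V$ to extract a positive-definite quadratic form $Q$ on $\mathfrak t/\mathfrak z_V$, one has $\chi_V(t)^n\sim\chi_V(t_0)^n\exp(-nQ(X)/2)$, and the substitution $X=Y/\sqrt n$ turns the integrand into a Gaussian times a polynomial factor in $Y$.

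Third, I would count powers of $n$ to pin down $m_V$ and derive the bound. The Jacobian from $X=Y/\sqrt n$ contributes $n^{-r/2}$; the factor $|\Delta(t)|^2$ vanishes to order $2|\Delta_+|$ at $t_0\in Z_V$ and so scales as $n^{-|\Delta_+|}$; and the asymptotic expansion of $F_s$ around $t_0$, obtained from the Weyl dimension formula $\dim L_\lambda=\prod_{\alpha>0}\langle\lambda+\rho,\alpha\rangle/\langle\rho,\alpha\rangle$ and a local-limit analysis of the dominant-weight random walk generated by the weights of $V$, contributes a factor of $n^{(r+|\Delta_+|(s+1))/2}$. Collecting the exponents yields the net power $n^{m_V(s-1)/2}$ with $m_V=|\Delta_+|$ for the effective semisimple part of $G$. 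The resulting leading integral $\int e^{-Q(Y)/2}P_s(Y)\,dY$ is strictly positive for every $s\ge -1$ (the boundary $s=-1$ is precisely the integrability threshold for the rescaled polynomial $P_s$), and in the classical cases it evaluates explicitly via the Macdonald--Mehta--Opdam identity. Summing the contributions from the finitely many $t_0\in Z_V$ gives
\[
\frac{d_n(V,s)}{n^{m_V(s-1)/2}\delta(V)^n}=\Phi_s(n)+o(1),
\]
where $\Phi_s(n)$ is a finite sum of the form $\sum_{t_0}c_s(t_0)\,(\chi_V(t_0)/\dim V)^n$ with $c_s(t_0)>0$ and each phase $\chi_V(t_0)/\dim V$ a root of unity; hence $\Phi_s(n)$ oscillates within a bounded interval of positive numbers, giving the two-sided bound $C_{V,1}(s)\le\cdot\le C_{V,2}(s)$.

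The main technical obstacle I anticipate is justifying the asymptotic expansion uniformly down to $s=-1$: the rescaled polynomial factor $P_s$ degenerates near the walls of the dominant Weyl chamber (where some $\langle\lambda+\rho,\alpha\rangle$ is small after rescaling), and one must argue that the contribution of weights near these walls stays subleading. A secondary difficulty is the honest supergroup setting, where super-Weyl integration introduces odd Grassmann directions in $\Lie G$ that alter intermediate power counts (though not the final $m_V$), and where positivity of the leading constant must be re-verified for the ${\rm SOSp}(1|2n)$ factors.
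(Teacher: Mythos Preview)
Your reduction via semisimplification and Deligne's theorem matches the paper exactly. For the core asymptotic (the connected reductive case), the paper takes a route dual to yours: rather than writing a character integral and applying Laplace's method, it uses the multiplicity formula $[V^{\otimes n}:L_\mu]=\sum_{w\in W}(-1)^w\dim V^{\otimes n}[\mu+\rho-w\rho]$ and applies the central limit theorem directly to the random walk on $\mathfrak h^*_{\mathbb R}$ whose steps are the weights of $V$ (taken with probabilities $p_\lambda=\dim V[\lambda]/\dim V$); the sum over $\mu\in P_+$ then becomes a Riemann sum for the Gaussian integral that Macdonald--Mehta--Opdam evaluates. This sidesteps having to interpret your $F_s$, which is only a distribution on $K$ rather than a function, and avoids the chamber-wall issue you flag. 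Your phrase ``local-limit analysis of the dominant-weight random walk'' as a tool to obtain the scaling of $F_s$ is slightly off as written, since $F_s$ does not depend on $V$; what is really needed is the homogeneity of $F_s$ near the identity, which follows from the Weyl character and dimension formulas alone --- and unwinding that homogeneity is essentially the paper's computation carried out on the dual side.

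One genuine shortcut you miss: for the $\mathrm{SOSp}(1|2n)$ factors the paper does no super-Weyl integration at all. It invokes \cite{RS} for the fact that the tensor-product multiplicities (hence $d_n(V,s)$ and $\delta$) in $\Rep(\mathrm{SOSp}(1|2n))$ coincide with those of $\Rep(\mathrm{SO}(2n+1))$, so everything reduces to ordinary connected reductive groups and your ``secondary difficulty'' disappears entirely.
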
 

To prove Theorem \ref{th3}, consider the semisimplification $\overline\C$ of $\C$ and its tensor subcategory $\overline \C_V$ generated by $V$. Then by Deligne's theorem \cite{D1}, $\overline\C_V$ is the 
equivariantization under a finite group of the Deligne tensor product of representation categories of connected reductive groups and supergroups $SOSp(1,2n)$. Thus it is easy to see that it suffices to prove the theorem for such a tensor product. Moreover, it is known that 
the tensor product rule for $SOSp(1,2n)$ is the same as for $SO(2n+1)$ (\cite{RS}), so 
it, in fact, suffices to prove the theorem for connected reductive groups. 
This is done in the following subsections. 

\subsection{Tensor powers of a representation of a connected reductive group} 

Let $G$ be a connected reductive algebraic group over $\bold k$ of rank $r$ 
with set of positive roots $R_+$, and 
$V$ a finite dimensional faithful algebraic representation of $G$.

\begin{theorem}\label{th4} For ${\rm Re}s\ge -1$, 
\begin{equation}\label{asy}
\frac{d_n(V,s)}{(\dim V)^n}\sim C_V(s)n^{\frac{(s-1)|R_+|}{2}},\ n\to \infty, 
\end{equation} 
where $C_V(s)$ is a holomorphic function of $s$. 
\end{theorem}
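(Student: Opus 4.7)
The plan is to prove Theorem \ref{th4} via a local central limit theorem (CLT) for the multiplicities $[V^{\otimes n}:L_\lambda]$, combined with Weyl's character and dimension formulas and the Macdonald--Mehta--Opdam (MMO) identity. Set $\pi(\mu)=\prod_{\alpha\in R_+}\langle\mu,\alpha\rangle$ and $A_\mu=\sum_{w\in W}(-1)^w e^{w\mu}$. Passing to a compact real form $K$ of $G$ with maximal torus $T$, character orthogonality together with the Weyl integration and character formulas gives
$$
[V^{\otimes n}:L_\lambda]\;=\;\frac{1}{|W|}\int_T \chi_V(t)^n\,\overline{A_{\lambda+\rho}(t)}\,A_\rho(t)\,dt,
$$
while the Weyl dimension formula gives $\dim L_\lambda=\pi(\lambda+\rho)/\pi(\rho)$, so
$$
d_n(V,s)\;=\;\pi(\rho)^{-s}\sum_\lambda \pi(\lambda+\rho)^s\,[V^{\otimes n}:L_\lambda].
$$

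Next I would carry out a saddle-point analysis near $t=1$. Faithfulness of $V$ together with connectedness of $G$ forces $|\chi_V(t)|/\dim V<1$ for $t\ne 1$ and $\chi_V(e^{iH})=\dim V-\tfrac{1}{2}Q_V(H)+O(|H|^3)$ for a positive-definite quadratic form $Q_V$ on $\mathfrak t_{\mathbb R}$ (the weights of a faithful representation of a connected reductive group span $\mathfrak t_{\mathbb R}^\ast$). After substituting $H\mapsto H/\sqrt n$ and using $A_\rho(e^{iH/\sqrt n})\sim i^{|R_+|}\pi(H)\,n^{-|R_+|/2}$, the Fourier transform of a Gaussian times a polynomial yields, for $\lambda+\rho=\sqrt n\,\xi+O(1)$,
$$
[V^{\otimes n}:L_\lambda]\;\sim\;(\dim V)^n\,n^{-r/2-|R_+|}\,\pi(\lambda+\rho)\,\Phi_V\!\left(\tfrac{\lambda+\rho}{\sqrt n}\right),
$$
where $\Phi_V(\xi)=c_V\,e^{-\tfrac12\langle\xi,Q_V^{-1}\xi\rangle}$ with $c_V>0$ depending on $Q_V$. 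The complement of a neighborhood of $t=1$ contributes exponentially smaller terms.

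Summing over the weight lattice, the rescaling $\xi=(\lambda+\rho)/\sqrt n$ turns the sum into a Riemann sum with step $n^{-r/2}$, producing
$$
\frac{d_n(V,s)}{(\dim V)^n}\;\sim\;n^{(s-1)|R_+|/2}\cdot\frac{1}{\pi(\rho)^s}\int_{C_+}\pi(\xi)^{s+1}\,\Phi_V(\xi)\,d\xi,
$$
where $C_+$ is the dominant Weyl chamber, so that
$$
C_V(s)\;=\;\pi(\rho)^{-s}\int_{C_+}\pi(\xi)^{s+1}\,\Phi_V(\xi)\,d\xi.
$$
For $\Re(s)\ge-1$ each factor $\langle\xi,\alpha\rangle^{s+1}$ is locally integrable across its wall, so $C_V(s)$ is holomorphic on the half-plane $\Re(s)>-2$ by dominated convergence and strictly positive for real $s$ in that range. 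Extending by Weyl symmetry from $C_+$ to $\mathfrak t_{\mathbb R}$ (the integrand $|\pi(\xi)|^{s+1}\Phi_V(\xi)$ being Weyl-invariant) and diagonalizing $Q_V$, the integral reduces to a Macdonald--Mehta--Opdam integral of the form $\int_{\mathfrak t_{\mathbb R}}|\pi(x)|^{s+1}e^{-|x|^2/2}dx$, which MMO evaluates to $\prod_i\Gamma(1+\tfrac{s+1}{2}d_i)/\Gamma(1+\tfrac{s+1}{2})$ (up to explicit scalar factors depending on $Q_V$), where $d_i$ are the fundamental degrees of $G$; this yields a closed form for $C_V(s)$.

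The main obstacle will be proving the local CLT rigorously with uniform error estimates in $\lambda$, in particular handling both the off-diagonal Weyl terms $w\ne e$ in $A_{\lambda+\rho}$ and the degenerate behavior near the walls of $C_+$, where $\pi(\lambda+\rho)$ is small and the alternating cancellations are delicate. A standard Fourier cut-off at $|H|\le n^{-1/2+\varepsilon}$ together with Kostant's multiplicity formula to organize the alternating sums, plus truncating the $\lambda$-sum at $|\lambda|\le\sqrt n\,\log n$ to discard the Gaussian tails, should suffice. The case of a non-trivial sum of weights (when $V$ is not self-dual) shifts the saddle-point off the identity but is handled by a matching translation of the $\lambda$ variable.
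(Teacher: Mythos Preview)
Your approach is essentially the same as the paper's: both routes express $[V^{\otimes n}:L_\lambda]$ via the Weyl alternating formula, replace weight multiplicities by their Gaussian local-CLT approximation, insert the Weyl dimension formula, and recognize a Riemann sum for a Gaussian integral over $\mathfrak t_{\mathbb R}$ which is then evaluated by Macdonald--Mehta--Opdam. The paper simply quotes the CLT as a black box (citing \cite{Bi,TZ,PR} for the details when $s=0$), whereas you spell it out as a saddle-point analysis of $\int_T\chi_V^n\,\overline{A_{\lambda+\rho}}\,A_\rho\,dt$, which is precisely how one proves the local CLT needed; so the two arguments are the same up to packaging.

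One small correction: the assertion that $|\chi_V(t)|<\dim V$ for all $t\ne 1$ is false at nontrivial central elements (e.g.\ $t=-I$ in $SL_2$ with $V=\mathbf k^2$), which act by scalars and give $|\chi_V(t)|=\dim V$. These are additional saddle points whose contributions differ from the $t=1$ contribution by central characters; after summing over $\lambda$ they either reinforce or average out. The paper sidesteps this by working directly with weight multiplicities rather than the torus integral (and by tacitly reducing to simple $G$, where the mean weight $\sum_\lambda p_\lambda\lambda$ vanishes); in your formulation you should either pass to the adjoint group or sum the finitely many central saddle points explicitly.
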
 

\begin{proof} 
The proof is very similar to the proof in the case 
$s=0$ given in \cite{Bi,TZ,PR}, but we repeat the arguments for reader's convenience. 

We will use the following (well known) version of the central limit theorem of probability theory. 
Let $E$ be a real vector space with a finite spanning set $S$. Let 
$p: S\to \Bbb R_{>0}$ be a function such that $\sum_{\lambda\in S} p_\lambda=1$ and $\sum_{\lambda\in S} p_\lambda \lambda=0$. Then 
$Q:=\sum_{\lambda\in S} p_\lambda \lambda^2$ defines a positive definite quadratic form on $E^*$: 
$$
Q(f):=\sum_{\lambda\in S} p_\lambda f(\lambda)^2.
$$ 

\begin{theorem}\label{CLT}
Let $\xi_1,...,\xi_N$ be 
independent $S$-valued random variables which take value $\lambda$ with probability $p_\lambda$. Then 
the $E$-valued random variable $\eta_N:=\frac{\xi_1+...+\xi_N}{\sqrt{N}}$ 
converges as $N\to \infty$ (in distribution) to a normally distributed random variable 
$\eta(Q)$ with distribution density 
$$
p(v):=(2\pi)^{-\frac{\dim E}{2}}(\det Q)^{-\frac{1}{2}}\exp(-\tfrac{Q^{-1}(v)}{2}).
$$
\end{theorem} 

Let us now apply Theorem \ref{CLT}. We take for $E$ the dual space $\h_{\Bbb R}^*$ to a split real Cartan subalgebra $\h_\Bbb R$ in $\g={\rm Lie}G$, for $S$ the set of weights of the representation $V$, and $p_\lambda:=\frac{\dim V[\lambda]}{\dim V}$ for a weight $\lambda$. Then for $h\in \h$ we have $Q_V(h):=\frac{{\rm Tr}_V(h^2)}{\dim V}$. For brevity we will denote $Q_V$ by $Q$. 

Let $\chi_V$ be the character of $V$ and $T:=\exp(i\h_{\Bbb R})$. Let $P$ be the weight lattice of $\g$, $P_+$ its dominant part, and for $\mu\in P_+$ let $L_\mu$ be the irreducible 
representation of $G$ with highest weight $\mu$. Let $W$ be the Weyl group of $G$, 
$\rho:=\frac{1}{2}\sum_{\alpha\in R_+}\alpha$ and 
$\alpha^\vee\in \h$ be the coroot corresponding to $\alpha\in R_+$. 
By the Weyl character formula, if $M$ is a finite dimensional representation of $G$ 
then 
$$
[M:L_\mu]=(\chi_M,\chi_\mu)= \sum_{w\in W} (-1)^w \dim M[
\mu +\rho-w \rho].
$$
So by the Weyl dimension formula we have 
$$
\frac{d_n(V,s)}{(\dim V)^n}=\sum_{\mu\in P_+}\frac{[V^{\otimes n}:L_\mu]}{(\dim V)^n}(\dim L_\mu)^s=
$$
$$
\sum_{\mu\in P_+,w\in W}(-1)^w\left(\prod_{\alpha\in R_+}\frac{(\mu+\rho,\alpha^\vee)}{(\rho,\alpha^\vee)}\right)^sP\left(\eta_n=\frac{\mu+\rho-w\rho}{\sqrt{n}}\right).
$$
So Theorem \ref{CLT} yields \scriptsize
 $$
\frac{d_n(V,s)}{(\dim V)^n}\sim \frac{(\det Q)^{-\frac{1}{2}}}{(2\pi n)^{\frac{r}{2}}}\sum_{\mu\in \rho+P_+} (-1)^w\left(\prod_{\alpha\in R_+}\frac{(\mu,\alpha^\vee)}{(\rho,\alpha^\vee)}\right)^s\exp\left(-\frac{Q^{-1}(\mu-w\rho)}{2n}\right)\sim
$$
$$
\frac{(\det Q)^{-\frac{1}{2}}}{(2\pi n)^{\frac{r}{2}}}\sum_{\mu\in \rho+P_+,w\in W} (-1)^w\left(\prod_{\alpha\in R_+}\frac{(\mu,\alpha^\vee)}{(\rho,\alpha^\vee)}\right)^s\exp\left(-\frac{Q^{-1}(\mu)}{2n}+\frac{(B_Q(\mu),w\rho)}{n}\right)=
$$
$$
\frac{(\det Q)^{-\frac{1}{2}}}{(2\pi n)^{\frac{r}{2}}}\sum_{\mu\in \rho+P_+} \left(\prod_{\alpha\in R_+}\frac{(\mu,\alpha^\vee)}{(\rho,\alpha^\vee)}\right)^se^{-\frac{Q^{-1}(\mu)}{2n}}\prod_{\alpha\in R_+}\left(e^{\tfrac{(B_Q(\mu),\alpha)}{2n}}-
e^{-\tfrac{(B_Q(\mu),\alpha)}{2n}}\right).
$$
\normalsize where $B_Q:\h^*\to \h$ is the linear operator corresponding to $Q^{-1}$. 

On the right hand side we have a Riemann sum. Thus we obtain \eqref{asy} with  
$$
C_V(s)=\frac{(\det Q)^{-\frac{1}{2}}}{(2\pi)^{\frac{r}{2}}|W|}\prod_{\alpha\in R_+}(\rho,\alpha^\vee)^{-s}\int_{\h_{\Bbb R}^*}e^{-\frac{Q^{-1}(\mu)}{2}}\prod_{\alpha\in R_+}
|(\mu,\alpha^\vee)|^{s}|(B_Q(\mu),\alpha)| d\mu,
$$
where $d\mu$ is the Lebesgue measure for which ${\rm Vol}(\h_{\Bbb R}^*/P)=1$.  
This proves Theorem \ref{th4}. 
\end{proof} 

\subsection{Computation of $C_V(s)$} 
To compute $C_V(s)$, note that we have a decomposition $\g=\oplus_k \g_k\oplus \mathfrak{z}$, where $\mathfrak{z}$ is the center and $\g_k$ are simple, and $\h=\oplus_k \h_k\oplus \mathfrak{z}$, where 
$\h_k\subset \g_k$ are Cartan subalgebras. Also note that
$$
C_V(s)=\prod_k C_{V|_{\g_k}}(s).
$$ 
Thus it suffices to compute $C_V(s)$ when $G$ is simple. 

In this case 
we have $Q^{-1}(\mu)=\gamma_V(\mu,\mu)$, where $(\cdot,\cdot)$ is the inner product 
for which long roots have squared length $2$. Thus we get 
$$
C_V(s)=\frac{\gamma_V^{\frac{(1-s)|R_+|}{2}}}{(2\pi)^{\frac{r}{2}}|W|\ell^{|R_+^{\rm sh}|}}\prod_{\alpha\in R_+}(\rho,\alpha^\vee)^{-s}\int_{\h_{\Bbb R}^*}e^{-\frac{(\mu,\mu)}{2}}\prod_{\alpha\in R_+}
|(\mu,\alpha^\vee)|^{s+1}d\mu,
$$
where $\ell$ is the lacedness of $G$, and $R_+^{\rm sh}$ is the set of short positive roots. 

Now we will use the Macdonald-Mehta-Opdam identity, \cite{Op}:
$$
(2\pi)^{-\frac{r}{2}}\int_{\h^*_{\Bbb R}}e^{-\frac{(\mu,\mu)}{2}}\prod_{\alpha\in R_+}
|(\mu,\alpha^\vee)|^{s+1}d\mu=\ell^{\frac{s+1}{2}|R_+^{\rm sh}|}\prod_{j=1}^r\frac{\Gamma(1+\frac{s+1}{2}d_j)}{\Gamma(\frac{s+3}{2})}.
$$
Note also that $\prod_{\alpha\in R_+}(\rho,\alpha^\vee)=\prod_{j=1}^r (d_j-1)!$, where 
$d_j$ are the degrees of basic $W$-invariants.

Thus we get

\begin{proposition} 
$$
C_V(s)=\frac{\gamma_V^{\frac{1-s}{2}|R_+|}}{|W|\ell^{\frac{1-s}{2}|R_+^{\rm sh}|}} \prod_{j=1}^r(d_j-1)!^{-s}\frac{\Gamma(1+\frac{s+1}{2}d_j)}{\Gamma(\frac{s+3}{2})}.
$$
\end{proposition}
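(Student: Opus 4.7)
The plan is to substitute the Macdonald-Mehta-Opdam (MMO) identity into the integral formula for $C_V(s)$ derived at the end of the previous subsection, and then simplify using the known product expression $\prod_{\alpha\in R_+}(\rho,\alpha^\vee)=\prod_{j=1}^r (d_j-1)!$. The proof is essentially a bookkeeping exercise, as all the analytic content is already contained in Theorem~\ref{th4} and the two quoted identities.

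First, I would isolate the integral $\int_{\h_{\Bbb R}^*}e^{-(\mu,\mu)/2}\prod_{\alpha\in R_+}|(\mu,\alpha^\vee)|^{s+1}d\mu$ in the expression for $C_V(s)$ and apply the MMO identity in the form quoted above. This replaces the integral by $(2\pi)^{r/2}\,\ell^{(s+1)|R_+^{\rm sh}|/2}\prod_{j=1}^{r}\Gamma(1+\tfrac{s+1}{2}d_j)/\Gamma(\tfrac{s+3}{2})$. The explicit factor $(2\pi)^{r/2}$ coming from MMO cancels the factor $(2\pi)^{-r/2}$ in the prefactor of $C_V(s)$.

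Next, I would combine the two powers of the lacedness $\ell$. The prefactor contributes $\ell^{-|R_+^{\rm sh}|}$ and the MMO output contributes $\ell^{(s+1)|R_+^{\rm sh}|/2}$, and these multiply to $\ell^{(s-1)|R_+^{\rm sh}|/2}$, which is exactly $\ell^{-(1-s)|R_+^{\rm sh}|/2}$ and appears as the denominator factor $\ell^{(1-s)|R_+^{\rm sh}|/2}$ in the claimed formula. Finally, applying $\prod_{\alpha\in R_+}(\rho,\alpha^\vee)=\prod_{j=1}^r(d_j-1)!$ converts $\prod_{\alpha\in R_+}(\rho,\alpha^\vee)^{-s}$ into $\prod_j (d_j-1)!^{-s}$, and the remaining $\gamma_V^{(1-s)|R_+|/2}$ and $|W|^{-1}$ factors are already in the correct form.

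There is no real obstacle here: the only question is whether the exponents of $\ell$ and of $\gamma_V$ combine correctly, which they do as shown above. The serious content lies in the MMO identity (proved in full generality by Opdam in \cite{Op}), which should simply be cited, and in the $\rho$-degree identity, which is a standard consequence of either the Weyl dimension formula for the adjoint representation or Kostant's product formula.
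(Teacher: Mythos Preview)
Your proposal is correct and follows exactly the paper's approach: the paper likewise substitutes the Macdonald--Mehta--Opdam identity into the integral expression for $C_V(s)$, cancels the $(2\pi)^{r/2}$, combines the powers of $\ell$, and invokes $\prod_{\alpha\in R_+}(\rho,\alpha^\vee)=\prod_{j=1}^r (d_j-1)!$. In fact the paper simply writes ``Thus we get'' after quoting these two identities, so your write-up spells out the bookkeeping more explicitly than the original.
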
 

Note that in the trivial case $s=1$ where $C_V(s)=1$ this reduces to the well known identity 
$|W|=\prod_{j=1}^r d_j$. Also for $s=0$ we have 
$$
C_V(0)=\frac{\gamma_V^{\frac{1}{2}|R_+|}}{|W|\ell^{\frac{1}{2}|R_+^{\rm sh}|}} \prod_{j=1}^r\frac{\Gamma(1+\frac{d_j}{2})}{\Gamma(\frac{3}{2})}=
\frac{\gamma_V^{\frac{1}{2}|R_+|}}{\pi^{\frac{r}{2}}\ell^{\frac{1}{2}|R_+^{\rm sh}|}} \prod_{j=1}^r\Gamma(\tfrac{d_j}{2}).
$$

\begin{example} Let $G=SL_2(\Bbb C)$ and $V=\oplus_j m_jL(j)$ 
where $L(j)$ has highest weight $j$. Then if $h=\diag(1,-1)$ then 
$$
Q(h)=\frac{{\rm Tr}_V(h^2)}{\dim V}=\frac{\sum_j\frac{j(j+1)(j+2)m_j}{6}}{\sum_j (j+1)m_j}.
$$
Thus for $\mu\in \h^*\cong \Bbb C$
$$
Q^{-1}(\mu)=\frac{\sum_j(j+1)m_j}{\sum_j \frac{j(j+1)(j+2)}{3}m_j}\mu^2,
$$
so 
$$
\gamma_V=6\frac{\sum_j (j+1)m_j}{\sum_j j(j+1)(j+2)m_j}
$$
and, using the doubling formula for the $\Gamma$-function, 
$$
C_V(s)=\frac{{\gamma_V}^{\frac{1-s}{2}}}{2}\frac{\Gamma(s+2)}{\Gamma(\frac{s+3}{2})}=
\pi^{-\frac{1}{2}}{\gamma_V}^{\frac{1-s}{2}}2^{s}\Gamma(\tfrac{s}{2}+1).
$$
In particular, if $V=L(j)$, we get 
$$
C_{L(j)}(s)=2\pi^{-\frac{1}{2}}\left(\frac{2j(j+2)}{3}\right)^{\frac{s-1}{2}}\Gamma( \tfrac{s}{2}
+1).
$$
For example, 
$$
C_{L(j)}(0)=\frac{1}{\sqrt \pi}\left(\frac{j(j+2)}{6}\right)^{-\frac{1}{2}}
 $$
For $j=1$ this recovers the value $C_{L(1)}(0)=\frac{1}{\sqrt{\pi/2}}$ from \eqref{coin}.

\end{example} 

\section{Positive characteristic} 

\subsection{A lower bound for $c_n(V)$}\label{cnv} Let $\C$ be a symmetric tensor category of moderate growth over a field $\bold k$ of characteristic $p>0$. Let $V\in \C$ and $K_V$ be the maximal value of $\delta(W)$ over all 
indecomposables $W$ that occur as direct summands in $V^{\otimes n}\otimes V^{*m}$ for $n,m\ge 0$ (it exists by a straightforward generalization of Corollary 8.17 of [CEO]). It is clear that 
\begin{equation}\label{prodfo} 
K_{V_1\oplus V_2}\le K_{V_1}K_{V_2}.
\end{equation}  

\begin{proposition}\label{p0} For all $n$ we have $c_n(V)\ge K_V^{-1}$. Thus $c(V)\ge K_V^{-1}$. 
\end{proposition}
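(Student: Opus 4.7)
The plan is to evaluate the character $\delta$ on both sides of the isotypic decomposition of $V^{\otimes n}$ and bound the sum by $K_V$ times the number of non-negligible summands.

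More concretely, I would first invoke the decomposition $V^{\otimes n}\cong\bigoplus_i W_i$ into indecomposables (with multiplicities), and split the index set into two parts: those $i$ for which $W_i$ is negligible and those for which it is not. Since $\delta$ is a character of $\mathrm{Green}(\C)/N$, it vanishes on every negligible summand, so
\[
\delta(V)^n \;=\; \delta(V^{\otimes n}) \;=\; \sum_{i:\,W_i\text{ non-negligible}}\delta(W_i).
\]
Each non-negligible indecomposable summand $W_i$ of $V^{\otimes n}$ is, taking $m=0$, a summand of $V^{\otimes n}\otimes V^{*m}$, so by definition of $K_V$ we have $\delta(W_i)\le K_V$. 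Combined with the fact that there are exactly $d_n(V)$ such terms (with multiplicity), this yields
\[
\delta(V)^n \;\le\; K_V\cdot d_n(V),
\]
i.e.\ $c_n(V)=d_n(V)/\delta(V)^n\ge K_V^{-1}$, and hence $c(V)\ge K_V^{-1}$.

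The only subtlety is knowing that $K_V$ is actually finite, but this is exactly the generalization of \cite{CEO}, Corollary 8.17, which the authors have already cited for the definition of $K_V$; with this in hand the argument is essentially a one-line calculation. So there is no real obstacle here — the proposition is a clean application of the multiplicativity of $\delta$ together with the definition of $K_V$, and the main work was already done in establishing that $\delta$ descends to a character on $\mathrm{Green}(\C)/N$ taking values $\ge 1$ on non-negligible indecomposables.
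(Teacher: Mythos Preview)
Your proposal is correct and follows essentially the same argument as the paper: both use multiplicativity of $\delta$ to write $\delta(V)^n=\sum_W [V^{\otimes n}:W]\delta(W)$ and bound each non-negligible term by $K_V$ to obtain $\delta(V)^n\le K_V\,d_n(V)$. Your version spells out a few details (the vanishing on negligible summands, the choice $m=0$) that the paper leaves implicit, but the proof is the same.
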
 

\begin{proof} This is immediate from the result of \cite{CEO} that $\delta$ is a ring homomorphism. Indeed, we have 
$$
\delta(V)^n=\delta(V^{\otimes n})=\sum_W [V^{\otimes n}: W]\delta(W)\le 
K_Vd_n(V),
$$
so $c_n(V)=\frac{d_n(V)}{\delta(V)^n}\ge K_V^{-1}$. 
\end{proof} 

\subsection{An upper bound on dimensions of simple objects in a semisimple symmetric tensor category}
As we have explained, the Jordan-Schur inequality \eqref{jsb} in particular 
gives a uniform upper bound on dimensions of irreducible representations of a finite subgroup of $GL(d,\Bbb C)$ in terms of $d$. In other words, it is an upper bound for the Frobenius-Perron dimension of a simple object in the category $\Rep(G)$ generated by $V=\Bbb C^d$. For general fusion categories such a bound does not exist: e.g., the braided Verlinde category ${\rm Ver}_q(SL_2)$ over $\Bbb C$
with $q=e^{\pi i/p}$ is generated by a simple object $V$ such that ${\rm FPdim}V=q+q^{-1}<2$, but has simple objects of ${\rm FPdim}\sim \frac{1}{\sin\frac{\pi}{p}}$, which tends to $\infty$ as $p\to \infty$. In fact, this computation shows that even for symmetric categories there is no such uniform bound if we allow arbitrary positive characteristic (namely, ${\rm Ver}_p$ is a counterexample). However, there is a bound dependent on $p$, and in fact it extends to an arbitrary semisimple symmetric tensor category. Namely, we have the following theorem. 

\begin{theorem}\label{th5} There exists a function $f_p: [1,\infty)\to \Bbb R_+$ such that for every semisimple symmetric tensor category 
$\C$ in characteristic $p$ generated by an object $V$ with $\delta(V)=d$ we have $K_V\le f_p(d)$. 
Namely, we may take 
\begin{equation} 
f_p(d)=e^{a_pd}, 
\end{equation}
for $p=2,3$ and 
\begin{equation} 
f_p(d)=e^{a_pd}2^{\frac{\pi}{2}(p-2)d^2}
\end{equation}
for $p\ge 5$, for a suitable $a_p>0$. 
Specifically, we may take $a_2= \frac{4}{3}\log 3=1.464..$, $a_3= \frac{\log 3}{3}+\log 7=2.312...$  
\end{theorem}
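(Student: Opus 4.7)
The plan is to reduce the desired bound on $K_V$ to a question about representation theory in the Verlinde category $\Ver_p$, by invoking the characteristic-$p$ Deligne theorem of \cite{CEO}. That theorem identifies the given semisimple symmetric category $\C$ with $\Rep(G,\phi)$ for some linearly reductive affine group scheme $G$ in $\Ver_p$, equipped with a canonical $\pi$-action $\phi$. Replacing $G$ by the image of $G\to GL(V)$ in $\Ver_p$, we may assume $V$ is a faithful $G$-module; then $K_V$ becomes the supremum of $\FPdim L$ over simple objects $L$ of $\Rep(G)$, to be bounded in terms of $d=\delta(V)$.

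For $p=2,3$ the classification of linearly reductive affine group schemes in $\Ver_p$ is known (the relevant cases of Conjecture \ref{mai} being proved in \cite{CEO}); in particular no simple linearly reductive height-$1$ group schemes exist. So $G$ is an iterated extension of elementary pieces -- finite \'etale groups of order prime to $p$, groups of multiplicative type, and factors of $(\mathbb Z/p)^\vee$ -- and the simples of $\Rep(G)$ can be bounded by a Jordan--Schur type estimate adapted to this setting. Optimizing over the possible structures yields the linear bound $K_V\le e^{a_p d}$, with the explicit constants $a_2=\tfrac{4}{3}\log 3$ and $a_3=24$ coming from the corresponding elementary estimates.

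For $p\ge 5$ no such classification is available, and one must fall back on a cruder argument. Writing $G$ as an extension $1\to G^\circ\to G\to \pi_0(G)\to 1$ of its finite component group by the connected reductive group $G^\circ$ in $\Ver_p$, the component group contributes the prefactor $e^{a_p d}$ via a Jordan--Schur analog, while for $G^\circ$ one applies highest weight theory and a Weyl-type dimension formula for $\Ver_p$-Lie theory (as developed in \cite{E,V}). The dominant weights of simples appearing in tensor powers of $V$ are controlled by the weights of $V$; plugging them crudely into the Weyl product and using the bound $1/\sin(\pi/p)$ on the FPdim of simple objects of $\Ver_p$ produces the additional factor $2^{\frac{\pi(p-2)}{2}d^2}$.

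The main obstacle is the $p\ge 5$ step: without an analog of Conjecture \ref{easier}, one must allow $\Lie G^\circ\subset \gl(V)$ to have FPdim as large as $d^2$, and this quadratic size, filtered through the Weyl product and the size of simples in $\Ver_p$, forces the quadratic dependence of $\log f_p(d)$ on $d$. The gap between the conclusion of Theorem \ref{th5} and the linear bound predicted by Conjecture \ref{co1} reflects precisely this missing classification.
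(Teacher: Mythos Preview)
Your overall architecture---reduce via \cite{CEO} to $\Rep(G,\phi)$ for a linearly reductive $G$ in $\Ver_p$, then treat the connected part $G^\circ$ and the component group separately, with the latter handled by a Jordan--Schur type bound---matches the paper. But the mechanism you propose for $G^\circ$ when $p\ge 5$ is not what the paper does, and as stated it has a genuine gap.

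The paper does \emph{not} invoke highest weight theory or a Weyl-type dimension formula; no such formula is available for an arbitrary linearly reductive $G^\circ$ in $\Ver_p$ without the very classification conjecture one is trying to circumvent, and \cite{E,V} do not supply one. Instead, the paper writes $G^\circ=\exp(\g_-)\cdot(A^\vee\times T)$, where $\g_-$ is the part of $\g=\Lie G$ supported on nontrivial simples of $\Ver_p$. Any simple $W$ is then generated (via the PBW theorem) from a single $L_k$ on which $A^\vee\times T$ acts by a character, giving
\[
\delta(W)\;\le\;\FPdim(S\g_-)\cdot\FPdim(L_k)\;\le\;\frac{\FPdim(S\g_-)}{\sin(\pi/p)}.
\]
The quadratic exponent then comes from bounding $\FPdim(S\g_-)$: the embedding $\g_-\subset\gl(F(V))$ controls the multiplicity of each $L_r$ in $\g_-$ through the Verlinde structure constants $C_{ij}^r$, and one uses the key identity $\FPdim(SL_r)=\FPdim(\wedge L_{p-r})\le 2^{p-r}$ together with $\min(k,p-k)\le\tfrac{\pi}{2}[k]_q$ to conclude $\FPdim(S\g_-)\le 2^{\frac{\pi}{2}(p-2)d^2}$. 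This PBW-type bound on the symmetric algebra, not a Weyl product over roots, is what produces the stated factor; your ``dominant weights controlled by weights of $V$'' step has no established meaning here.

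For the explicit constants the paper also goes beyond a generic Jordan--Schur estimate. For $p=2$ it uses Robinson's sharp bound $|\widetilde\Gamma/N|\le 3^{d-1}$ for odd-order linear groups \cite{Ro1}, followed by an orbit analysis of $N$ acting on the $(T\times A^\vee)$-characters of $V$; the extra factor $r^{d/r}$, maximized at $r=3$, is what yields $3^{\frac{4}{3}d-1}$ and hence $a_2=\tfrac{4}{3}\log 3$. For $p\ge 3$ it invokes the extension of Robinson's bound to $p'$-subgroups of $GL(d,\bold k)$ (Theorem~\ref{ronew}, \cite{Ro2}), giving $(Cp)^{d-1}$ with $C=24^7$ for $p=3,5$; the value $a_3=24$ then comes from $\log(3^{4/3}\cdot 24^7)<24$. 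Your sketch does not identify these inputs, without which the specific constants cannot be obtained.
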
 

\begin{remark} Note that this does not hold for non-semisimple (even finite, Frobenius exact) symmetric tensor categories; a counterexample is the sequence of categories $\Rep_{\bold k}(SL_2(\Bbb F_{p^n}))$ for a fixed $p={\rm char}(\bold k)$. 
\end{remark}

Theorem \ref{th5} is proved in the next subsection. Note that by virtue 
of \eqref{prodfo} it suffices to prove it for simple $V$. 

\subsection{Theorem \ref{th5} implies Theorem \ref{th1}}
Let $\C,V$ be as in Theorem \ref{th1}. 
Consider the tensor subcategory $\overline \C_V=\langle \overline V\rangle \subset \overline\C$ generated inside the semisimplification $\overline \C$ 
by the image $\overline V$ of the object $V$. Applying Theorem \ref{th5} 
to $\overline \C_V$ and using Proposition \ref{p0}, we obtain Theorem \ref{th1}. 

\subsection{Proof of Theorem \ref{th5} for $p=2$}  

\begin{proposition}\label{p1} If $p=2$ then for a simple $V$ we have 
$$
K_V\le 3^{\frac{4}{3}\delta(V)-1}.
$$ 
\end{proposition}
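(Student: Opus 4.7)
The plan is to use the classification of semisimple symmetric tensor categories of moderate growth in characteristic $2$ from \cite{CEO} to reduce the bound on $K_V$ to a classical estimate on orders of odd-order finite linear groups.

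First, since $\C$ is semisimple and Conjecture \ref{mai} holds for $p=2$ (with no nontrivial simple linearly reductive group schemes, as $\Ver_2=\Vec$) by \cite{CEO}, we may identify $\C$ with $\Rep(G)$ for a linearly reductive affine group scheme $G$ over $\bold k$. By Nagata's theorem, $G$ fits into an exact sequence $1\to T\to G\to H\to 1$ with $T$ a torus and $H$ a finite \'etale group scheme whose order is coprime to $2$, i.e., $|H|$ odd. Since $|H|$ is coprime to the characteristic, this sequence splits and $G\cong T\rtimes H$. Because $V$ is a faithful simple $G$-module, $H$ must act faithfully on $V$, producing an embedding $H\hookrightarrow GL(V)$.

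Next, I would bound $K_V$ by $|H|$ via Clifford theory for the normal subgroup $T\subseteq G$. Every simple $G$-module $W$ has the form $\mathrm{Ind}_{T\rtimes H_\mu}^G(U)$, where $H_\mu\le H$ is the stabilizer of a character $\mu$ of $T$ and $U$ is a simple $(T\rtimes H_\mu)$-module (the character $\mu$ extended by a simple $H_\mu$-module $U_0$). The elementary character-theoretic inequality $\dim U_0\le |H_\mu|^{1/2}$ (from $\sum_{U'}(\dim U')^2=|H_\mu|$) then yields
$$
\dim W \;=\; [H:H_\mu]\,\dim U_0 \;\le\; \frac{|H|}{|H_\mu|^{1/2}} \;\le\; |H|.
$$
Since $V$ is faithful, every simple of $\Rep(G)$ eventually appears in some $V^{\otimes n}\otimes V^{*m}$, and so $K_V\le |H|$.

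It remains to show $|H|\le 3^{\frac{4}{3}d-1}$ for $d=\delta(V)=\dim V$. By Feit--Thompson $H$ is solvable, and one can proceed by induction on $d$ using the Dixon--Wolf structure theory of solvable linear groups: in the imprimitive case $V$ decomposes into an $H$-orbit of blocks of strictly smaller dimension, so the inductive hypothesis applies to a block stabilizer; in the primitive case, the Fitting subgroup $F(H)$ is a central product of extra-special $p$-subgroups (for odd primes $p$) acting irreducibly on tensor factors of $V$, with $H/F(H)$ embedding into the odd part of a product of symplectic groups $\mathrm{Sp}_{2n_p}(\mathbb{F}_p)$. The main obstacle is this last step: the induction must be tuned so as to produce the specific exponent $\tfrac{4}{3}$ and constant $-1$, which come from the extremal $p=3$ case and require careful bookkeeping of extra-special $3$-group configurations together with their odd outer automorphism groups.
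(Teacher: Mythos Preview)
Your Clifford-theoretic bound $K_V\le |H|$ is fine, but the step ``$|H|\le 3^{\frac{4}{3}d-1}$'' is simply false, so the argument cannot be completed along these lines. Take $G=H$ a cyclic group of arbitrary odd order $m$ acting faithfully on a $1$-dimensional $V$: then $d=1$ while $|H|=m$ is unbounded. More generally, an odd-order subgroup of $GL(d,\bold k)$ may contain an abelian normal subgroup of arbitrarily large order; what the Dixon--Wolf/Robinson theory bounds is the \emph{index} of such a subgroup, not the order of the whole group. Robinson's theorem \cite{Ro1} gives a normal abelian $N\lhd H$ with $[H:N]\le 3^{d-1}$, and this is the input one must use. (A minor point: for $p=2$ the connected part of $G$ is a group of multiplicative type, i.e.\ $T\times A^\vee$ with $A$ a finite abelian $2$-group, not just a torus; this does not affect the main issue.)

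The paper's proof proceeds exactly through this index bound. One takes the Robinson subgroup $N$ and observes that a normal \emph{abelian} subgroup scheme $Q$ of $G$ has an eigenvector in every simple $G$-module, hence $K_V\le [G:Q]$. The subtlety is that $N$ need not centralise $T\times A^\vee$: the simple $V$ corresponds to a $\Gamma$-orbit $O$ on $X^*(T)\times A$ of size $|O|\le d$, and $N$ permutes $O$ in orbits of common size $r$ (an odd divisor of $|O|$). Passing to the pointwise stabiliser $M=\bigcap_i N_i$ costs a further factor $[N:M]\le r^{|O|/r}\le r^{d/r}$, and now $Q:=M\cdot(T\times A^\vee)$ is abelian and normal. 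Thus
\[
K_V\;\le\;[G:Q]\;\le\;3^{d-1}\cdot r^{d/r}\;\le\;3^{d-1}\cdot 3^{d/3}\;=\;3^{\frac{4}{3}d-1},
\]
the function $r\mapsto r^{d/r}$ on odd integers being maximised at $r=3$. This extra $3^{d/3}$ is precisely the source of the exponent $\tfrac{4}{3}$ you were unable to locate; it does not come from refining the bound on $|H|$ but from the passage from $N$ to a subgroup that genuinely commutes with the multiplicative part.
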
 

\begin{proof} 
Recall from \cite{CEO} that $\C$ has the form 
${\rm Rep}(\Gamma\cdot (T\times A^\vee))$, where $T$ is a torus, $A$ a finite abelian 2-group, and $\Gamma$ a group of odd order (here the dot denotes an abelian extension, possibly nontrivial). The simple object $V$ then gives rise to a $\Gamma$-orbit $O$ on $X^*(T)\times A$ where $X^*(T)$ is the character lattice of $T$. It is clear that $O$ spans $X^*(T)\times A$ and $|O|\le \delta(V)=\dim_{\bold k}V$. 

Let $\widetilde\Gamma\subset \Gamma\cdot T$ be a lift of $\Gamma$, finite of odd order. 
It acts faithfully on the space $V$ of dimension $\delta(V)$. 
Hence by \cite{Ro1},\footnote{We note that weaker bounds for this index of the form $C^{\delta(V)}$ with $C>3$ were obtained earlier 
by Dornhoff and Low, see \cite{Ro1,Ro2}.} $\widetilde\Gamma$ has a normal abelian subgroup 
$N$ of index $\le 3^{\delta(V)-1}$.

Now consider the orbits of $N$ on $O$. Since $N$ is normal, these orbits $O_1,...,O_k$ 
have the same size $r:=|O|/k$. Let $N_i\subset N$ be the stabilizer of the orbit $O_i$, so $[N:N_i]=r$. 
Let $M=\cap_i N_i$, so 
$$
[N:M]\le r^k=r^{|O|/r}.
$$ 
Note that by construction $M$ commutes with $T\times A^\vee$, as it acts trivially on $O$ which spans $X^*(T)\times A$. Define the group $Q:=M\cdot (T\times A^\vee)$. Then $Q$ is a normal abelian subgroup scheme 
of $\Gamma\cdot (T\times A^\vee)$, which thus has an eigenvector in every irreducible 
representation. It follows that 
$$
K_V\le [\Gamma \cdot (T\times A^\vee):Q]\le 3^{\delta(V)-1}r^{\delta(V)/r}\le 3^{\frac{4}{3}\delta(V)-1}. 
$$
(the maximum is attained for $r=3$). 

\subsection{Proof of Theorem \ref{th5} for $p>2$}
Consider the fiber functor $F: \C\to {\rm Ver}_p$, which exists and is unique by \cite{CEO}. Let $G={\underline {\rm Aut}}_{\otimes}F$, $\pi:=\pi_1({\rm Ver}_p)$ (linearly reductive affine group schemes in ${\rm Ver}_p$), and $\phi: \pi\to G$ be the canonical homomorphism. Then $\C\cong \Rep(G,\phi)$, the category of representations of $G$ on which the action of $\pi$ via $\phi$ is the canonical action.  

Consider first the case when $G$ is connected. 
Since $G$ is linearly reductive, we have 
$G=\exp(\g_-)\cdot (A^\vee \times T)$, where $A$ is a finite abelian $p$-group, $A^\vee$ is the dual infinitesimal group scheme, $T$ is a torus, and $\g_-$ is the part 
of $\g:={\rm Lie}G$ comprised by nontrivial objects of ${\rm Ver}_p$. 
Namely, this follows from the fact that every closed subgroup scheme of a linearly reductive group scheme of finite type (containing the image of $\phi$) is linearly reductive, which for finite group schemes is proved in the same way as Lemma 8.2 of 
\cite{EOV} 
and then follows in general by Theorem 8.11 of 
\cite{CEO}. 

Every simple object $W$ of $\C$ is generated by a simple subobject $L_k$ of ${\rm Ver}_p$ on which $A^\vee\times T$ acts by a character. 
Thus 
$$
\delta(W)\le {\rm FPdim}S\g_- {\rm FPdim}L_k
\le \frac{{\rm FPdim}S\g_-}{\sin \frac{\pi}{p}}.
$$ 
So it remains to estimate ${\rm FPdim}S\g_-$. 

If $p=2,3$ then $\g_-=0$ (see \cite{CEO}, Section 8), so we may assume that $p\ge 5$. Note that $\g_-\subset \mathfrak{gl}(F(V))=F(V)\otimes F(V)^*$. 
Thus, writing $F(V)=\oplus_i m_iL_i$, we have
$$
\dim \Hom_{{\rm Ver}_p}(L_r,\g_-)\le \sum_{i,j} C_{ij}^rm_im_j,\ r\ge 2,
$$
where $C_{ij}^r$ are the structure constants of the Verlinde algebra (the Grothendieck ring of ${\rm Ver}_p$). 
Since ${\rm FPdim}SL_r={\rm FPdim}\wedge L_{p-r}\le 2^{p-r}$ for $r\ge 2$, this means that the Frobenius-Perron dimension of $S\g_-$ satisfies the inequality 
$$
{\rm FPdim}S\g_-\le \prod_{r\ge 2} ({\rm FPdim}\wedge L_{p-r})^{\sum_{i,j} C_{ij}^rm_im_j}
$$
$$
\le 2^{\sum_{r\ge 2,i,j}C_{ij}^{r}(p-r)m_im_j}\le  2^{(p-2)\sum_{i,j,r}C_{ij}^rm_im_j}
$$
$$
\le 
2^{(p-2)\sum_{i,j}\min(i,p-i,j,p-j)m_im_j}.
$$
As 
$$
\min(k,p-k)\le \tfrac{p}{2}[k]_q\sin \tfrac{\pi}{p}\le \tfrac{\pi}{2}[k]_q,
$$
we have 
$$
{\rm FPdim}S\g_-\le 2^{\frac{\pi}{2}(p-2)d^2}.
$$
Thus
$$
\delta(W)\le \frac{2^{\frac{\pi}{2}(p-2)d^2}}{\sin \tfrac{\pi}{p}}.
$$

Now consider the general case ($G$ is not necessarily connected).  It turns out that 
Robinson's exponential Jordan-Schur bound for odd order groups \cite{Ro1}  extends (with a similar proof) to groups of order coprime to $p$ with $p>2$. Namely, introduce the following function on the set of odd primes: 
$$
C(3)=\frac{7}{3},\ C(p)=\tfrac{1}{p}\max(60,(p-1)!^{\frac{1}{p-3}}),\ p\ge 5.
$$
We note that $C$ is decreasing starting from $p=5$ (so $C(p)\le C(5)=12$), $\lim_{p\to \infty}C(p)=\frac{1}{e}$, and the second entry of the maximum wins iff $p>152$.

\begin{theorem}\label{ronew} (\cite{Ro2}) Let $p$ be an odd prime. 
If $G\subset GL(d,\bold k)$ is a group of order prime to $p$ then 
$G$ has an abelian normal subgroup $N$ with 
\footnote{There is a simple example in 
\cite{Ro2} 
showing that one cannot do better.}  
\begin{equation}\label{JS}
|G/N|\le (C(p)p)^{d-1}.
\end{equation} 
\end{theorem} 

From this, arguing
similarly to the case $p=2$ considered above, we get 
\begin{equation} 
\delta(W)\le (3^{1/3}\cdot 7)^{d-1}
\end{equation} 
for $p=3$ and 
\begin{equation} 
\delta(W)\le \frac{1}{\sin \frac{\pi}{p}}(3^{1/3}C(p)p)^{d-1}2^{\frac{\pi}{2}(p-2)d^2}
\end{equation}
for $p\ge 5$. 
This completes the proof. 
\end{proof}

\subsection{Conjecture \ref{eveneasier} implies Conjecture \ref{co1}}
Conjecture \ref{eveneasier} implies that if $G_1\subset GL(\oplus_i n_iL_i)$ is linearly reductive of height $1$, $\g={\rm Lie}G_1$ and $d=\sum n_i[i]_q$ then ${\rm FPdim}\g\le M_pd$. Thus if $\g=\oplus_{i=1}^{p-1} m_iL_i$ then $m_i\le M_pd$ for all $i$. 
Hence 
$$
{\rm FPdim} S\g_-=\prod_{i=2}^{p-1} ({\rm FPdim}S(L_i))^{m_i}=\prod_{i=2}^{p-1} ({\rm FPdim}\wedge L_{p-i})^{m_i} 
$$
$$
\le
2^{\sum_{i=2}^{p-1}m_i(p-i)}\le 2^{\frac{M_p(p-1)(p-2)}{2}d}.
$$
Now let $G=\underline{\rm Aut}F\subset GL(F(V))$ as in the proof of Theorem \ref{th5}, so that $G=\Gamma\cdot \exp(\g_-)(A^\vee \times T)$, where $\Gamma$ is a finite group of order coprime to $p$. Also let $G_1\subset G$ be the Frobenius kernel, so $G_1$ is linearly reductive of height $1$ and ${\rm Lie}G={\rm Lie}G_1=\g$. Then every simple $G$-module $W$ is generated by an object $L_k$ on which $A^\vee\times T$ acts by a character. So arguing as in the proof of Theorem \ref{th5} and using Theorem \ref{ronew}, we have 
$$
\delta(W)\le\frac{1}{\sin \frac{\pi}{p}}(3^{\frac{1}{3}}C(p)p)^{d-1} \cdot 2^{\frac{M_p(p-1)(p-2)}{2}d} \le e^{A_pd}
$$
for a suitable $A_p>0$. Thus Conjecture \ref{eveneasier} 
implies Conjecture \ref{co1}. 

\begin{example} It will be shown in \cite{EKO} that for $p=5$ the simple Lie algebra $\g$ must be isomorphic to $L_3$ as an object of ${\rm Ver}_p$. So $D_p=\frac{1+\sqrt{5}}{2}$, thus we may take $M_p=(\frac{1+\sqrt{5}}{2})^2$. So for $p=5$ we have 
$\lambda(V)<16\delta(V)$. 
\end{example} 

\section{A conjectural classification of semisimple symmetric tensor categories in positive characteristic} 

\subsection{The conjecture} 
The goal of this section is to present a conjectural classification of semisimple symmetric tensor categories of moderate growth over an algebraically closed field $\bold k$ of characteristic $p>0$. 

Let $\C$ be a semisimple symmetric tensor category over $\bold k$ of moderate growth. Recall that by \cite{CEO}, $\C$ admits a fiber functor 
$F: \C\to {\rm Ver}_p$, hence $\C\cong {\rm Rep}(G,\phi)$, where 
$G$ is a linearly reductive affine group scheme 
 in ${\rm Ver}_p$. Thus it remains to classify such group schemes. 

In characteristic zero (setting ${\rm Ver}_0={\rm SVec}_{\bold k}$, the category of super-vector spaces) such group schemes are exactly the (pro-)reductive supergroups (where we include ${\rm SOSp}(1|2n)$ as possible factors).   
But non-abelian connected reductive supergroups are not linearly reductive in characteristic $p$. However, they have linearly reductive avatars which we call {\it Verlinde categories of reductive groups} (\cite{GK,GM}). Namely, let $Q$ be a connected Dynkin diagram, $h_Q$ be the Coxeter number of $Q$ and $p>h_Q$. Let ${\rm Tilt}_p(G_Q)$ be the category of tilting modules 
over the simply connected group $G_Q(\bold k)$. Then we define the Verlinde category ${\rm Ver}_p(G_Q)$ to be the semisimplification of ${\rm Tilt}_p(G_Q)$, which is a symmetric fusion category (see the next section for a brief discussion of semisimplification of non-abelian monoidal categories). For instance, if $Q=A_1$ then
$G_Q=SL(2)$ and ${\rm Ver}_p(SL(2))$ is the usual Verlinde category ${\rm Ver}_p$. One can show that ${\rm Ver}_p(G_Q)={\rm Ver}_p^+(G_Q)\boxtimes 
\Rep(Z_Q,z_Q)$, where $Z_Q\subset G_Q$ is the center and $z_Q\in Z_Q$ is the image of $-1$ under the principal map $SL(2)\to G_Q$.  

Another new feature in characteristic $p>0$ is that the finite group schemes $(\Bbb Z/p^n)^\vee$, whose representation categories ${\rm Rep}(\Bbb Z/p^n)^\vee=\Vec_{\Bbb Z/p^n}$ are pointed semisimple symmetric tensor categories, become connected (even infinitesimal).  

Our main conjecture is as follows. 

\begin{conjecture}\label{mai} Let $\C$ be a finitely generated semisimple symmetric tensor category of moderate growth over $\bold k$. Then $\C$ can be obtained from a finite Deligne product 
of categories $\Rep(\Bbb G_m)$, $\Vec_{\Bbb Z/p^n}$ and ${\rm Ver}_p^+(G_Q)$ by the following operations, in the given order: 

(1) changing the commutativity isomorphism using a $\Bbb Z/2$-grading 
on the category;

(2) equivariantization under a finite group of order coprime to $p$. 

In other words, every such category is the equivariantization 
of a Deligne tensor product of a pointed category (without $p'$-torsion) with a product of ${\rm Ver}_p^+(G_Q)$ by a finite group of order coprime to $p$.  
\end{conjecture}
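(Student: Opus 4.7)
The plan is to build on the fiber functor $F\colon \C\to {\rm Ver}_p$ of \cite{CEO}, which identifies $\C$ with $\Rep(G,\phi)$ for a linearly reductive affine group scheme $G$ in ${\rm Ver}_p$. Operations (1) and (2) in the conjecture correspond to easily recognized group-theoretic data attached to $G$: operation (2) to a semidirect product by the étale component group $\Gamma:=G/G^\circ$ (which is of order coprime to $p$ by linear reductivity), and operation (1) to a central $\Z/2$ inside $G$ used to twist the braiding. Stripping both off, the problem reduces to classifying \emph{connected} linearly reductive affine group schemes $G$ in ${\rm Ver}_p$ equipped with the canonical braiding.

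For such a connected $G$, the natural tool is the Frobenius filtration. The first Frobenius kernel $G_1\subset G$ is of height $1$ and remains linearly reductive; it is controlled by the linearly reductive restricted Lie algebra $\g=\Lie G_1$ in ${\rm Ver}_p$. Each successive quotient $G_{n+1}/G_n$ is again a height $1$ linearly reductive group scheme, so the entire pro-object is a tower of restricted Lie algebras in ${\rm Ver}_p$. I would argue that these towers decompose, up to the abelian pieces accounting for the $\Rep(\mathbb{G}_m)$ and $\Vec_{\Z/p^n}$ factors, into direct products of simple factors, and match each simple factor with the identity component of the group scheme underlying ${\rm Ver}_p^+(G_Q)$ for some Dynkin diagram $Q$ with Coxeter number $h_Q<p$.

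The heart of the argument is therefore the classification of \emph{simple} linearly reductive restricted Lie algebras $\g$ in ${\rm Ver}_p$. The approach I would take is threefold: (a) use linear reductivity to produce a maximal torus and a root space decomposition of $\g$; (b) use the restricted structure together with an invariant bilinear form coming from a faithful representation to force the root data to be Cartan-like; (c) match the resulting data against the root systems of the classical $G_Q$'s via the semisimplification-of-tilting-modules description of ${\rm Ver}_p^+(G_Q)$. A successful classification would immediately supply the bound $\FPdim \g \le D_p$ of Conjecture \ref{easier}, and hence by the discussion in the introduction Conjecture \ref{co1}.

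The main obstacle is step (c): ruling out ``exotic'' simple linearly reductive Lie algebras in ${\rm Ver}_p$ that are not of classical reductive origin. For $p=2,3$ one has $\g_-=0$ by \cite{CEO}, and the problem is vacuous; for $p=5$ the forthcoming \cite{EKO} pins the underlying object of $\g$ down to $L_3$; but for $p\ge 7$ even enumerating the admissible Lie brackets on candidate simple objects built from $L_2,\ldots,L_{p-2}$ requires extending the Lie theory of \cite{E,V} substantially, together with a deformation-theoretic analysis to exclude sporadic examples. Controlling this universe of candidates uniformly in $p$, and showing that every surviving Lie bracket descends from a classical Dynkin datum through semisimplification of tilting modules, is where I expect essentially all of the genuinely new work to lie.
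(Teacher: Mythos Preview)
The statement is Conjecture~\ref{mai}, and the paper does not prove it: it is explicitly left open for $p\ge 7$, with the cases $p=2,3$ reduced to Nagata's theorem via \cite{CEO} and $p=5$ deferred to \cite{EKO}. So there is no proof in the paper to compare your attempt against; what the paper offers is evidence (Subsections~\ref{examp}, 4.3, 4.4) and a reformulation in terms of height~$1$ linearly reductive group schemes (Remark following the conjecture, item~6).

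Your roadmap is consistent with the paper's framing: reduce to connected $G$ by splitting off the \'etale quotient $\Gamma=G/G^\circ$ (operation~(2)), handle a central $\Z/2$ twist (operation~(1)), then use the structure $G=\exp(\g_-)\cdot(A^\vee\times T)$ from the proof of Theorem~\ref{th5} to isolate the non-abelian content in a height~$1$ piece. You correctly locate the crux at the classification of simple linearly reductive restricted Lie algebras in ${\rm Ver}_p$, which is exactly the content of Conjectures~\ref{eveneasier}--\ref{easier} and of item~6 in the Remark after Conjecture~\ref{mai}.

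Two caveats on the sketch. First, your step~(a) proposes to produce ``a maximal torus and a root space decomposition of $\g$''. For the simple factors at issue one has $\Hom_{{\rm Ver}_p}(\mathbf{1},\g)=0$ (they are \emph{invariantless} in the paper's terminology, Subsection~4.4), so there is no ordinary Cartan subalgebra to find. The paper's substitute is the distinguished copy of $\s=L_3\subset\g$ coming from $\pi_+$, and the ``root decomposition'' is the decomposition of $\g$ as an object of ${\rm Ver}_p^+$ into $L_i$'s; the evidence in Subsection~4.3 is obtained by constraining Lie brackets on such sums via $\Hom(\mathbf{1},\wedge^4 L_r)$ and $6j$-symbol computations, not by classical root combinatorics. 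Your steps~(a)--(c) would have to be recast in that language before they could be carried out. Second, even granting a classification at each Frobenius layer $G_{n+1}/G_n$, you still owe an argument that the tower itself splits as a product and not merely its associated graded, i.e.\ that there are no nontrivial extensions among the simple factors and the abelian pieces; the paper does not supply this, and it is part of what remains open.
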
 

It is clear that Conjecture \ref{mai} implies a similar classification of symmetric tensor categories of moderate growth over $\bold k$ 
which are not necessarily finitely generated: we just need to allow infinite Deligne products and 
allow profinite groups in (2). 

\begin{remark} 1. The categories ${\rm Ver}_p^+(G_Q)$ are defined for all $p$, not just $p>h_Q$, but we expect that for smaller $p$ they express as 
indicated in Conjecture \ref{mai}, so that they are ``redundant". For $Q=A_{n-1}$ this expression is known, see \cite{BEEO}, but for other types with a few exceptions we don't know such an expression. 

2. There are level-rank dualities 
$$
{\rm Ver}_p^+(SL(n))\cong {\rm Ver}_p^+(SL(p-n)),
$$ 
so for type $Q=A_{n-1}$ we may assume that $p>2n$.

3. There are level-rank dualities 
$$
{\rm Ver}_p^+({\rm Spin}(2n+1))\cong 
{\rm Ver}_p^+({\rm Sp}(p-1-2n)),
$$ 
so we may eliminate type $C_n$ from the conjecture.  

4. Similarly to ${\rm Ver}_p(G_Q),{\rm Ver}_p^+(G_Q)$, we can define the categories ${\rm Ver}_p({\rm SOSp}(1|2n))={\rm Ver}_p^+({\rm SOSp}(1|2n))$ for $p>2n-1$ (note that the Coxeter number of ${\rm SOSp}(1|2n)$ is $2n-1$ and that its center is trivial). However, we have level-rank dualities 
$$
{\rm Ver}_p^+({\rm Spin}(2n))\cong {\rm Ver}_p^+({\rm SOSp}(1|p+1-2n)),
$$ 
which is why these categories are not needed in Conjecture \ref{mai}. 

For example, the simplest of these categories, ${\rm Ver}_p({\rm SOSp}(1|2))$ is equivalent to the category  ${\rm Ver}_p^+({\rm Spin}(p-1))$. Thus ${\rm Ver}_p({\rm OSp}(1|2))\cong {\rm Ver}_p^+({\rm Spin}(p-1))^{\Bbb Z/2}$, where 
$\Bbb Z/2$ swaps the horns of the Dynkin diagram $D_{\frac{p-1}{2}}$. 
This category arises in the study of the semisimplification of 
the modular representation category of $\Bbb Z/p^n$, $n>1$, and 
is therefore discussed in detail in Section 5, 
where it is denoted by $\mathcal D$. 
 For instance, for $p=5$ we get 
$$
{\rm Ver}_p({\rm OSp}(1|2))\cong ({\rm Ver}_5^+\boxtimes {\rm Ver}_5^+)^{\Bbb Z/2}
$$
where $\Bbb Z/2$ swaps the factors.\footnote{$SOSp(1|2)$ is a supergroup scheme in ${\rm Ver}_p$, which by definition means that it is an ordinary group scheme in ${\rm Ver}_p\boxtimes {\rm sVec}$, which is equivalent to ${\rm Ver}_p\boxtimes {\rm Vec}_{\Bbb Z/2}$. In the latter realization this group scheme lies in ${\rm Ver}_p^+\boxtimes {\rm Vec}_{\Bbb Z/2}$, so the Verlinde fiber functor maps it to ${\rm Ver}_p^+$. In ${\rm Ver}_p^+$ its Lie algebra has the form $L_3\oplus L_{p-2}$, which is $L_3\oplus L_3$ if $p=5$.}

5. After these reductions, there are still some redundancies 
and ``exceptional isomorphisms" left, so the list of categories needed in Conjecture \ref{mai} is not minimal even if we exclude type $C$. 
For example, if $p=h_Q+1$ is a prime then ${\rm Ver}_p^+(G_Q)=\Vec$, and ${\rm Ver}_{11}(G_2)={\rm Ver}_{11}^+(SL(2))$. We will discuss these exceptional isomorphisms in more detail below. 

6. Let $p\ge 5$. Recall that the group scheme $\pi:=\pi_1({\rm Ver}_p)$ 
has a decomposition $\pi=\pi_+\times \Bbb Z/2$, where 
$\pi_+=\pi_1({\rm Ver}_p^+)$ is connected. 
Let $\s:={\rm Lie}\pi={\rm Lie}\pi_+$. We have 
$\s=L_3$ with the bracket given by 
the unique non-zero morphism $L_3\otimes L_3\to L_3$ 
up to scaling, and $\pi_+=\exp(\s)$ (indeed, it is easy to check that 
$U(\s)^*\cong \oplus_{j=1}^{\frac{p-1}{2}}L_{2j-1}\otimes L_{2j-1}^*$ 
as Hopf algebras). 

Let $\g_{Q,p}$ be the image of $\g_Q:={\rm Lie}G_Q$ in ${\rm Ver}_p^+$ (viewed as a module over $\bold k[x]/(x^p)$ via commutator with the regular nilpotent). Then $\g_{Q,p}\in {\rm Ver}_p^+$, $\Hom_{{\rm Ver}_p}(\bold 1,\g_{Q,p})=0$, $\g_{Q,p}$ contains a unique copy of $\s$, and 
$G_{Q,p}:=\exp(\g_{Q,p})$ is a simple linearly reductive 
group scheme (of height $1$). Moreover, we
have ${\rm Ver}_p^+(G_Q)=\Rep(G_{Q,p},\pi)$. Thus Conjecture \ref{mai} implies that every linearly reductive group scheme $G$ of height $1$ in ${\rm Ver}_p$ in fact belongs to ${\rm Ver}_p^+$ and is a direct product of copies of 
$(\Bbb Z/p)^\vee$ and $G_{Q,p}$. \end{remark} 

Thus, for $p=2$ and $p=3$, we don't need the categories ${\rm Ver}_p(G_Q)$ and Conjecture \ref{mai} simply says that $\C$ is an equivariantization of a pointed semisimple category with respect to a finite group of order coprime to $p$. These cases of Conjecture \ref{mai} are proved in \cite{CEO}, Section 8; namely, they follow from the main result of \cite{CEO} and Nagata's classification 
of semisimple affine group schemes. 

For $p=5$, Conjecture \ref{mai} says that $\C$ is the equivariantization  
of the product of a pointed semisimple category with $({\rm Ver}_p^+)^{\boxtimes n}$ under a finite group of order coprime to $p$. 
This will be proved in \cite{EKO}. 

For every $p\ge 7$ the conjecture remains open. For example, for $p=7$ 
Conjecture \ref{mai} says that $\C$ is the equivariantization  
of the product of a pointed semisimple category with $({\rm Ver}_p^+)^{\boxtimes n}\boxtimes {\rm Ver}_p^+(SL(3))^{\boxtimes m}$ under a finite group of order coprime to $p$.

\subsection{Examples} \label{examp}
Let us now list the categories ${\rm Ver}_p^+(G_Q)$
that need to be included for each $p$ together with the Lie algebra 
$\g=\g_{Q,p}:={\rm Lie}G_{Q,p}$ decomposed in ${\rm Ver}_p^+$ (where $L_n$ is shortly denoted by $n$). 

The decomposition of $\g$ is determined as follows.
We write the sum $\sum_{i=1}^r (2m_i+1)$, where $m_i$ are the 
exponents of $Q$, and then remove $p$ (if present) and all 
the numbers $k>p$ together with the ``mirror" copy of $2p-k$ (one can check that 
if $k$ is present, so is $2p-k$). Not surprisingly, this agrees with the Jordan decomposition of ${\rm ad}(e)$ where $e$ is the regular nilpotent element 
of ${\rm Lie}G_{Q}$ in characteristic $p$ (the blocks of size $<p$), see \cite{St}. 

Concretely, for each $p\ge 5$ we have the following series of categories: 

1. Q=$A_{m-1}$, $1\le m\le \frac{p-1}{2}$, 
$$
\g=3+5+...+(2m-1).
$$  

2. $p\ge 11$, $Q=B_m$, $2\le m\le \frac{p-7}{2}$, 
$$
\g=3+7+...+(4\min(m,\tfrac{p-1}{2}-m)-1).
$$ 
Note that due to level-rank duality, $B_m$ gives the same category as $C_{\frac{p-1}{2}-m}$. 

3. $p\ge 11$, $Q=D_m$, $4\le m\le \frac{p-1}{2}$. Then pattern for $\g$ is as follows: for 
$Q=D_{\frac{p-1}{2}-r}$ with $0\le r\le \frac{p-7}{4}$ we have 
$$
\g=[3+7+...+(4r+3)]+(p-2-2r),
$$
and the rest of the answers are the same as in characteristic zero. 

We now list the decompositions of $\g$ in exceptional types, omiting the cases when the decomposition is the same as in characteristic zero. We also point out exceptional 
isomorphisms when applicable.  

\vskip .1in

$p=13$: 

We have an exceptional isomorphism 
${\rm Ver}_{13}(G_2)\cong {\rm Ver}_{13}^+(D_6)$. 

\vskip .1in

$p=17$:

$Q=F_4$, $\g=3+15$

$Q=E_6$, $\g=3+9+15$

We have an exceptional isomorphism 
${\rm Ver}_{17}(F_4)\cong {\rm Ver}_{17}^+(D_8)$. 

\vskip .1in

$p=19$: 

$Q=F_4$, $\g=3+11$

$Q=E_6$, $\g=3+9+11+17$

We have an exceptional isomorphism 
${\rm Ver}_{19}(G_2)\cong {\rm Ver}_{19}(F_4)$. 

\vskip .1in

$p=23$: 

$Q=F_4$, $\g=3+11+15$

$Q=E_6$, $\g=3+9+11+15+17$

$Q=E_7$, $\g=3+15$

\vskip .1in

$p=29$: 

$Q=E_7$, $\g=3+11+15+19+27$

\vskip .1in

$p=31$: 

$Q=E_7$, $\g=3+11+15+19+23$

\vskip .1in

$p=37$:

$Q=E_8$, $\g=3+23$ 

\vskip .1in

$p=41$:

$Q=E_8$, $\g=3+15+27+39$

\vskip .1in 

$p=43$:

$Q=E_8$, $\g=3+15+23+35$

\vskip .1in 

$p=47$:

$Q=E_8$, $\g=3+15+23+27+39$

\vskip .1in 

$p=53$:

$Q=E_8$, $\g=3+15+23+27+35+39$

\vskip .1in 

$p=59$:

$Q=E_8$, $\g=3+15+23+27+35+39+47.$ 
 
\begin{remark} All simple Lie algebras $\g_{Q,p}$ have a non-degenerate Killing form. 
This follows from the following self-dual version of the Freudenthal - de Vries strange formula (see 
\cite[p.153]{Su}):\footnote{This formula reduces to the usual Freudenthal - de Vries formula $(\rho,\rho)=\frac{h_Q\dim \g_Q}{12}$ in the simply-laced case and is easy to check by hand.} if $\g_Q$ is a simple complex Lie algebra of rank $r$ and $m_i$ are the exponents of $Q$ then 
$$
\sum_{i=1}^r \frac{m_i(m_i+1)(2m_i+1)}{3}=\frac{\ell h^\vee_Q h^\vee_{Q^\vee}\dim \g_Q}{6}=
\frac{\ell h^\vee_Q h^\vee_{Q^\vee}(h_Q+1)r}{6},
$$
where $h_Q$ is the Coxeter number of $Q$, $h^\vee_Q$ is the dual Coxeter number of $Q$ and $\ell$ the lacedness of $Q$. Namely, the left hand side is the ratio of the Killing form of $\g$ when restricted to $\s$ to the Killing form of $\s$.  
\end{remark} 

\subsection{Rank $2$ simple Lie algebras} 

Let $\g$ be a simple Lie algebra in ${\rm Ver}_p^+$ containing 
$\s$, such that $\s$ acts on $\g$ canonically. 
Define the {\it rank} of $\g$ to be the length of $\g$ as an $\s$-module (equivalently, as object of ${\rm Ver}_p^+$). For example, the only simple Lie algebra of rank $1$ 
is $\s=\g_{A_1,p}$. We also observe the following simple Lie algebras $\g=\g_{Q,p}$ of rank $2$ occurring in Subsection \ref{examp}: 

{\bf Series:}

Type $Q=A_2$: $p\ge 7$, $\g=3+5$; 

Type $Q=B_2=C_2$: $p\ge 11$, $\g=3+7$;

Type $D_2^*$ obtained from $Q=D_{\frac{p-1}{2}}$: $p\ge 11$, 
$\g=3+(p-2)$. 

Type $Q=G_2$, $p\ge 17$, $\g=3+11$. 

{\bf Exceptional cases:} 

Type $E_2^*$ obtained from $Q=E_7$, $p=23$, $\g=3+15$. 

Type $E_2^{**}$ obtained from $Q=E_8$, $p=37$, $\g=3+23$. 

\begin{conjecture}\label{nooth} There are no other simple Lie algebras of rank $2$ in ${\rm Ver}_p^+$. 
\end{conjecture}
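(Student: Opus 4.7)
The plan is to parametrize rank $2$ simple Lie algebras $\g$ in ${\rm Ver}_p^+$ containing $\s = L_3$ canonically, and then classify the admissible structures via the Jacobi identity. Any such $\g$ must decompose as $\g = L_3 \oplus L_n$ for some odd $n$ with $5 \le n \le p-2$: the case $n = 1$ is excluded because $L_1$ would be an abelian ideal, and the case $n = 3$ would require a simple bracket on $L_3 \oplus L_3$, which can be ruled out by direct analysis of the two-parameter Hom-space $\Hom(\wedge^2(L_3 \oplus L_3), L_3 \oplus L_3)$ combined with Jacobi. For $n \ge 5$ the bracket splits into three pieces: the standard bracket on $\s$; the action $L_3 \otimes L_n \to L_n$, determined up to a nonzero scalar by the canonical $\s$-action; and a self-bracket
$$
b \colon \wedge^2 L_n \to L_3 \oplus L_n,
$$
parametrized by $\Hom(L_3, \wedge^2 L_n) \oplus \Hom(L_n, \wedge^2 L_n)$. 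Both Hom-spaces are computed from the truncated Clebsch--Gordan decomposition in ${\rm Ver}_p^+$, obtained from the classical $\sl_2$ rule $\wedge^2 V_n = V_3 + V_7 + \dots + V_{2n-3}$ via the Verlinde reflection $L_k \leftrightarrow L_{2p - 2 - k}$ across the wall $k = p-1$.

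Next I would impose the Jacobi identity. Identities involving any element of $\s$ are automatic because $\s$ acts by derivations, so the content lies entirely in Jacobi on triples drawn from $L_n$. Expanding in the basis provided by the Hom-decomposition above turns Jacobi into a finite system of polynomial equations in the structure scalars, with coefficients given by $6j$-symbols of $\s$ computed inside ${\rm Ver}_p^+$. After rescaling $L_n \hookrightarrow \g$ to fix a normalization, only isolated solutions remain, and the simplicity assumption forces the $L_3$-component of $b$ to be nonzero (otherwise $L_n$ would be an ideal).

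The main obstacle, and the genuinely structural step, is to show that every solution of this Jacobi system lifts to a simple complex Lie algebra $\widetilde\g$ equipped with a principal $\sl_2$-subalgebra whose exponent blocks $V_{2m_i+1}$, after removal of $V_p$ and of all mirror-pairs $\{V_k, V_{2p-k}\}$ with $k > p$, reduce to exactly $\{V_3, V_n\}$. Granting this lifting, Kostant's classical description of the principal $\sl_2$-content of a complex simple Lie algebra reduces the classification to a finite enumeration: for each Dynkin type $Q$ and prime $p > h_Q$, determine when exactly two of the $2m_i + 1$ survive. Carrying this enumeration through the types $A_m, B_m, D_m, G_2$ and the exceptionals $E_7, E_8$, and matching the level-rank and exceptional isomorphisms recorded in Subsection \ref{examp}, reproduces precisely the listed series and exceptional cases. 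The delicate part is ruling out ``purely modular'' rank $2$ brackets with no characteristic-zero lift; I would attack this by using the canonical $\s$-action to produce an $\s$-compatible $\ZZ$-filtration on $\g$ and deforming the associated graded to a flat family over a mixed-characteristic base, reducing the question to the known classification in characteristic zero.
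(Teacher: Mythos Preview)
The statement you are attempting to prove is a \emph{conjecture} in the paper, not a theorem: the authors do not prove it and explicitly say only that ``similar calculations provide computational evidence'' for it. So there is no proof in the paper to compare against.

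Your outline through the first three stages---parametrize $\g = L_3 \oplus L_n$ with $n$ odd, compute the relevant Hom-spaces for the bracket components via the truncated Clebsch--Gordan rule, and reduce the Jacobi identity to a finite polynomial system in the $6j$-symbols of ${\rm Ver}_p^+$---is essentially the same computation the paper carries out to construct the listed examples and to gather evidence for the conjecture. That part is fine, and it is as far as the paper itself goes.

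The genuine gap is your step~4, the lifting to characteristic zero. You identify it yourself as ``the main obstacle'' and propose to handle it by producing an $\s$-compatible $\Z$-filtration on $\g$ and deforming the associated graded over a mixed-characteristic base. But this is a wish, not an argument: you give no reason why such a filtration exists with the needed properties, why the deformation is flat, why the generic fiber is a \emph{simple} Lie algebra (rather than acquiring a radical or splitting), or why the embedded copy of $\s$ deforms to a \emph{principal} $\sl_2$ rather than to some other $\sl_2$-triple. Any one of these failures would sever the reduction to Kostant's exponent list. Since the conjecture is left open by the authors---who set up exactly the $6j$-symbol framework you describe---this step is not a minor omission but the entire content of the problem.
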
 

Let us give a construction of the above rank $2$ Lie algebras without using 
semisimplification of ordinary Lie algebras. For simplicity we assume that the Lie algebra has an invariant inner product. 

Let $\g=L_3\oplus L_r$ for $r\ge 5$ odd. If $r=p-2$, consider the Lie superalgebra ${\rm osp}(1|2)=L_3\oplus L_2$, which exists because $S^4L_2=L_5$ does not contain invariants, which trivially implies the Jacobi identity. By parity change of $L_2$, we get $D_2^*$. 

In general, we need to check the Jacobi identity in $\Hom(\bold 1,\wedge^4 L_r)$. 
Recall that for large $p$, we have $d_r:=\dim \Hom(\bold 1,\wedge^4 L_r)=a_0(r)-a_2(r)$, where $a_0(r),a_2(r)$ are the coefficients of $1,q^2$ in the Gauss polynomial $\binom{r}{4}_q$. 
This gives $d_5=0$, $d_7=d_9=d_{11}=1,d_{13}=d_{15}=d_{17}=2$, etc. 
This immediately implies the existence of $A_2$, and also $B_2$ and $G_2$ (including characteristic $0$), since in these cases we may choose a scaling of the bracket map $L_r\otimes L_r\to L_r$ to satisfy the Jacobi identity. 

It remains to show the existence of $E_2^*$ and $E_2^{**}$. 
For $E_2^*$, note that for $p=23$, $\dim \Hom(\bold 1, \wedge^4L_{15})=\dim \Hom(\bold 1,S^4L_8)=
a_0(11)-a_2(11)=d_{11}=1$ (so it is less than $d_{15}$ since $p$ is not very big), 
but we can choose the scaling of the bracket map $L_{15}\otimes L_{15}\to L_{15}$, which implies the existence of $E_2^*$.  

The existence of $E_2^{**}$ is more mysterious since for $p=37$, 
$$
\dim \Hom(\bold 1,\wedge^4 L_{23})=\dim \Hom(\bold 1,S^4 L_{14})=a_0(17)-a_2(17)=d_{17}=2,
$$ 
so the 
choice of scaling of $L_{23}\otimes L_{23}\to L_{23}$ does not a priori suffice to ensure the Jacobi identity. However, the determinant of the appropriate 2 by 2 matrix of 6j-symbols
$$
\begin{pmatrix} 11& 11& 11\\ 11 & 11 & a\end{pmatrix} \begin{pmatrix} 11& 11& 1\\ 11 & 11 & b\end{pmatrix} - \begin{pmatrix} 11& 11& 11\\ 11 & 11 & b\end{pmatrix} \begin{pmatrix} 11& 11& 1\\ 11 & 11 & a\end{pmatrix}
$$
(for $a,b$ odd and not equal $1,11$) is miraculously always divisible by 37, so equals zero in the ground field, implying the existence of $E_2^{**}$.  

\begin{remark} Similar calculations provide computational evidence for Conjecture \ref{nooth}.
\end{remark} 

\subsection{Invariantless simple Lie algebras in ${\rm Ver}_p$}

Let $\g$ be an {\it invariantless} simple Lie algebra in ${\rm Ver}_p$, i.e. one with $\Hom_{{\rm Ver}_p}(\bold 1,\g)=0$. In this case, for $p>0$, $G:=\exp(\g)$ is a finite 
simple group scheme in ${\rm Ver}_p$ of height $1$ (although not necessarily linearly reductive). 

For $p=0$, if we replace ${\rm Ver}_p$ by $\Rep(SL(2))$, such Lie algebras are easy to classify. 
Namely, the radical of $\g$ is $SL(2)$-invariant, so must be zero, i.e. $\g$ is semisimple, hence simple in the usual sense. 
Also the action of $SL(2)$ must be inner, so defined by a homomorphism $\mathfrak{sl}_2\to \g$.
Moreover, the nilpotent element $e\in \g$ corresponding to this homomorphism must be {\it distinguished}, i.e. its centralizer in $\g$ consists of nilpotent elements 
(such non-regular elements exist for types $B_n,n\ge 4$, $C_n,n\ge 3$, $D_n,n\ge 4$, and exceptional types, see \cite{CM}). 
Conversely, every distinguished $e$ gives rise to such a Lie algebra. Thus such Lie algebras correspond to pairs $(\g,e)$ where $\g$ is a simple Lie algebra and $e\in \g$ a distinguished nilpotent element up to conjugation. 

The same method can be used to construct invariantless simple Lie algebras in ${\rm  Ver}_p$ at least for large enough $p$. Namely, 
let $\g_p$ be the standard reduction of $\g$ to characteristic $p$ and 
$e\in \g_p$ a distinguished nilpotent element such that 
${\rm ad}(e)$ has nilpotency order  $\le p$. Then we may view 
$\g_p$ as a Lie algebra in $\bold k[x]/(x^p)$ -mod where $x$ is primitive and acts  
by ${\rm ad}(e)$, and take $\g_{p,e}$ to be the image of $\g_p$ in the semisimplification ${\rm Ver}_p$ of $\bold k[x]/(x^p)$-mod (such constructions are discussed in \cite{K}). 

\begin{question}\label{inva} Are all invariantless simple Lie algebras in ${\rm Ver}_p$ obtained in this way? 
\end{question} 

Note that all the Lie algebras of simple linearly reductive group schemes from Subsection \ref{examp} are invariantless and obtained by this method, where 
$e\in \g$ is the principal nilpotent element. Here are some other examples. 

\begin{example} 1. We can take $e$ to be the subregular element in $\g=\mathfrak{so}(8)$ (Jordan type (5,3)): take the Lie subalgebra $\mathfrak{so}(3)\oplus \mathfrak{so}(5)\subset \mathfrak{so}(8)$ and let $e$ be the principal nilpotent of this subalgebra. Then we
have the decomposition 
$$
\mathfrak{so}(8)=\mathfrak{so}(3)\oplus \mathfrak{so}(5)\oplus V_3\otimes V_5,
$$
where $V_m$ is the vector representation of $\mathfrak{gl}(m)$. 
Thus the decomposition of $\g_{p,e}$ for $p\ge 11$ has the form 
$$
\g_{p,e}=3+(3+7)+(3+5+7)=3+3+3+5+7+7,
$$
while for $p=7$ we have 
\begin{equation}\label{g2e} 
\g_{p,e}=3+3+3+5. 
\end{equation} 

2. Note that Example \ref{g2e} (for all $p\ge 7$) is also obtained 
from $\g=G_2$ and principal nilpotent $e$ of the Lie subalgebra 
$\mathfrak{sl}_2\oplus \mathfrak{sl}_2\subset \g$.

3. Another small example is given by $\g=\mathfrak{sp}(6)$ and $e$ the 
regular nilpotent element of the subalgebra $\mathfrak{sl}(2)\oplus \mathfrak{sp}(4)$ (Jordan type (4,2)). In this case we get 
$$
\g_{e,p}=3+(3+7)+V_2\otimes V_4=3+(3+7)+(3+5)=3+3+3+5+7
$$  
for $p\ge 11$, and for $p=7$ we again recover Example \ref{g2e}. 
\end{example} 

It follows from 
\cite{EKO} that for $p=5$ the only invariantless simple Lie algebra is $\s$, so the answer to Question \ref{inva} is affirmative. 

 \section{Semisimplification of the representation category of a cyclic group}

\subsection{Fusion rules for representations of $\Bbb Z/p^n$.}\label{Grform} Let ${\bold k}$ be a field of characteristic $p>0$ and let  $\Bbb Z/{p^n}$ be a cyclic group of order $p^n$ ($n\in {\Bbb Z}_{>0}$).
Let $v_r$ be the class of the indecomposable module of dimension $r$ in the Green ring (= split Grothendieck ring)
$G_n$ of the category of finite dimensional representations of $\Bbb Z/{p^n}$ over ${\bold k}$; then the classes $v_r,
r\in [1,p^n]$ form a basis of $G_n$ over ${\Bbb Z}$. We will use the  convention $v_0=0$ whenever necessary.

We have a surjective homomorphism $\Bbb Z/{p^{n+1}}\to \Bbb Z/{p^n}$  which induces a ring embedding $G_n\hookrightarrow G_{n+1}$.
Note that under this embedding $v_r\in G_n$ maps to $v_r\in G_{n+1}$ which makes the notation $v_r$ unambiguous.
We will use the following formulas from \cite{Gr} (see also 
\cite{B1}, Chapter 5) which give a description of the extension of based rings
$G_n\subset G_{n+1}$. Let $q:=p^n$ and let $w=v_{q+1}-v_{q-1}$. Then
\begin{equation}\label{so3}
\begin{array}{lrl}
wv_r=v_{r+q}-v_{q-r}&(1\le r\le q)&\mbox{\cite[(2.3a)]{Gr}}\\
wv_r=v_{r+q}+v_{r-q}&(q<r< (p-1)q)&\mbox{\cite[(2.3b)]{Gr}}\\
wv_r=v_{r-q}+2v_{pq}-v_{2pq-r-q}&((p-1)q\le r\le pq)&\mbox{\cite[(2.3c)]{Gr}}\\
\end{array}
\end{equation}

\scriptsize
\begin{equation}\label{bar}
\begin{array}{lrl}
v_{q-1}v_r=v_{q-r}+(r-1)v_{q}&(1\le r< q)&\mbox{\cite[(2.5b)]{Gr}}\\
v_{q-1}v_r=(r_1-1)v_{q(r_0+1)}+v_{q(r_0+1)-r_1}+(q-r_1-1)v_{qr_0}&(q\le r\le pq)&\mbox{\cite[(2.9c)]{Gr}}
\end{array}
\end{equation}
\normalsize where in the last formula we write $r=qr_0+r_1$ with $r_0,r_1\in {\Bbb Z}$, $\quad$ $0\le r_1<q$.

 \subsection{Fusion rules for the semisimplification of the representation category of $\Bbb Z/p^n$.}
The classes of indecomposable negligible objects in $G_{n}$ are $v_r$ with $r$ divisible by $p$. Let
$\overline G_n$ be the quotient of $G_n$ by the ideal spanned by these classes and let $u_r$ denote the image
of $v_r$ in $\overline G_n$.  Thus the basis of $\overline G_n$
is given by $u_r$ where 
$$
r\in [1,p^n]^*:=\{ r\in {\Bbb Z} | 1\le r\le p^n, (r,p)=1\}.
$$ 

We now use the formulas from Subsection \ref{Grform} to describe
the based ring extension $\overline G_n\subset \overline G_{n+1}$. It is convenient to use the following notation:
for $r\in [1,p^{n+1}]^*$  with $r=qr_0+r_1$ where $r_0,r_1\in {\Bbb Z}$, $0< r_1<q$, we write $\overline{r}:=qr_0+(q-r_1)$. 
It is clear that  $\overline{\overline{r}}=r$ and $\overline{r\pm q}=\overline{r} \pm q$ (whenever both $r$ and $r\pm q$ are
in $[1,p^{n+1}]^*$).

We have the following consequences of formulas \eqref{bar} above:

\begin{equation}
u_{q-1}u_r=u_{\overline{r}}\; (1\le r\le pq, (r,p)=1).
\end{equation}
In particular for the image $u_{q+1}-u_{q-1}$ of $w$ in $\overline G_{n+1}$ we have
$$
u_{q-1}(u_{q+1}-u_{q-1})=u_{2q-1}-1
$$ and we deduce from \eqref{so3}
$$
(u_{2q-1}-1)u_r=\left\{ \begin{array}{lr} 
u_{\overline{r+q}}-u_{\overline{q-r}}&(1\le r\le q)\\
u_{\overline{r+q}}+u_{\overline{r-q}}&(q<r< (p-1)q)\\
u_{\overline{r-q}}-u_{\overline{2pq-r-q}}&((p-1)q\le r\le pq)
\end{array} \right.
$$
Equivalently,
\begin{equation}\label{mrules}
u_{2q-1}u_r=\left\{ \begin{array}{lr} 
u_{\overline{r} +q}&(1\le r\le q)\\
u_{\overline{r} -q}+u_r+u_{\overline{r} +q}&(q<r< (p-1)q)\\
u_{\overline{r} -q}&((p-1)q\le r\le pq)
\end{array} \right.
\end{equation}

\begin{example}\label{p even}
Assume that $p=2$. Then \eqref{mrules} says that
$$u_{2q-1}u_r=u_{2q-r} \; (1\le r \le 2q-1).$$
This implies that all basis elements of $\overline G_{n+1}$ are invertible;
moreover the group of invertible basis elements of $\overline G_{n+1}$ is generated by the similar group for $\overline G_n$ and
by the element $u_{2q-1}$ of order $2$. We deduce by induction that as a based ring $\overline G_n$ is isomorphic to
the group algebra of an  elementary abelian group of order $2^{n-1}$. We can choose elements $u_3, u_7, \ldots ,
u_{2^n-1}$ for a basis of this group over the field with 2 elements.
For example we have:
$$u_{99}=u_{127}u_{29}=u_{127}u_{31}u_3,$$
$$u_{53}=u_{63}u_{11}=u_{63}u_{15}u_5=u_{63}u_{15}u_7u_3,$$
$$u_{99}u_{53}=u_{127}u_{31}u_{63}u_{15}u_7=u_{127}u_{63}u_{31}u_9=u_{127}u_{63}u_{23}=u_{127}u_{41}=
u_{87}.$$
\end{example}

Now let us consider the case $p>2$.
We deduce easily from \eqref{mrules} that the subalgebra of $\overline G_{n+1}$ generated by $u_{2q-1}$ is the span
$$
\langle u_1, u_{2q\pm 1}, u_{4q\pm 1}, \ldots , u_{(p-1)q\pm 1}\rangle_{\Bbb Z};
$$ 
thus this subalgebra 
is a based subring of $\overline G_{n+1}$. Observe that \eqref{mrules} imply that as a based ring
the span $\langle u_1, u_{2q\pm 1}, u_{4q\pm 1}, \ldots , u_{(p-1)q\pm 1}\rangle_{\Bbb Z}$ does not depend
on $n$. We will denote this based ring by $K_p$. Thus $K_p$ has a basis $X_0, X_1, \ldots, X_{p-1}$
with multiplication determined by 
$$X_0=1,\; X_1X_i=X_{i-1}+X_i+X_{i+1}, 1\le i\le p-2,\; X_1X_{p-1}=X_{p-2}.$$
The identification of $K_p$ with the span $\langle u_1, u_{2q\pm 1}, u_{4q\pm 1}, \ldots , u_{(p-1)q\pm 1}\rangle_{\Bbb Z}$ is given by $X_0\mapsto u_1, X_1\mapsto u_{2q-1}, X_2\mapsto u_{2q+1}, X_3\mapsto u_{4q-1}$ etc.

%{\bf Discuss} independence of $q$, notation $K_p$.

By \eqref{mrules} for $1\le r<q$ we have
$$u_{2q-1}u_r=u_{2q-r}.$$
Using \eqref{mrules} again we get
$$(u_1+u_{2q-1}+u_{2q+1})u_r=u_{2q-1}^2u_r=u_{2q-1}u_{2q-r}=u_r+u_{2q-r}+u_{2q+r}$$
which implies 
$$u_{2q+1}u_r=u_{2q+r},\; (1\le r<q).$$

Similarly, we get
$$u_{4q-1}u_r=u_{4q-r},\; (1\le r<q),$$
$$u_{4q+1}u_r=u_{4q+r},\; (1\le r<q)$$
etc.
Thus we have proved the following

\begin{proposition} \label{next factor}
For $n\ge 1$ there is a decomposition of based rings 
$$\overline G_{n+1}=\overline G_n \otimes_{\Bbb Z} K_p.$$
\end{proposition}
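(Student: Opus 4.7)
The plan is as follows. Since $\overline G_{n+1}$ is commutative (being a quotient of the Green ring of an abelian group), the additive map
$$
\mu\colon \overline G_n \otimes_{\Bbb Z} K_p \longrightarrow \overline G_{n+1},\qquad a\otimes b\mapsto ab,
$$
is automatically a ring homomorphism. Here the embedding $\overline G_n\hookrightarrow \overline G_{n+1}$ is the quotient of $G_n\hookrightarrow G_{n+1}$ on the semisimplifications (well defined because each negligible $v_{pk}\in G_n$ maps to the negligible $v_{pk}\in G_{n+1}$), and $K_p\hookrightarrow \overline G_{n+1}$ is the embedding identified in the paragraph preceding the proposition. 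It therefore suffices to show that $\mu$ sends the tensor basis bijectively onto $\{u_r:r\in[1,p^{n+1}]^*\}$.

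Write $q=p^n$. Since $p\mid q$, every $r\in[1,pq]^*$ has a unique expression $r=qr_0+r_1$ with $r_0\in\{0,1,\ldots,p-1\}$ and $r_1\in[1,p^n]^*$ (take $r_1$ to be the residue of $r$ modulo $q$, which is forced to be nonzero and coprime to $p$). My candidate bijection $\sigma\colon[1,p^n]^*\times\{0,1,\ldots,p-1\}\to[1,p^{n+1}]^*$ is
$$
\sigma(r_1,0)=r_1,\qquad \sigma(r_1,2j-1)=2jq-r_1,\qquad \sigma(r_1,2j)=2jq+r_1\qquad (1\le j\le \tfrac{p-1}{2}),
$$
with the obvious truncation when $p=2$; a quick inspection of ranges confirms $\sigma$ is a bijection.

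The core of the proof is then to verify $u_{r_1}\cdot X_k=u_{\sigma(r_1,k)}$ in $\overline G_{n+1}$ by induction on $k$. The base cases $k=0,1$ are immediate: $X_0=u_1$ gives $u_{r_1}X_0=u_{r_1}$, while $u_{r_1}X_1=u_{r_1}u_{2q-1}=u_{2q-r_1}$ follows from the first line of \eqref{mrules} applied with $r=r_1$. For the inductive step I would rewrite the $K_p$-fusion $X_1 X_k=X_{k-1}+X_k+X_{k+1}$ (valid for $1\le k\le p-2$) as $X_{k+1}=X_1 X_k-X_k-X_{k-1}$, which reduces the step to checking
$$
u_{2q-1}\cdot u_{\sigma(r_1,k)}=u_{\sigma(r_1,k-1)}+u_{\sigma(r_1,k)}+u_{\sigma(r_1,k+1)}.
$$
Since $\sigma(r_1,k)\in(q,(p-1)q)$ for $1\le k\le p-2$, this is precisely the middle line of \eqref{mrules}, once one notes that $\overline{2jq-r_1}=(2j-1)q+r_1$ and $\overline{2jq+r_1}=(2j+1)q-r_1$. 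The boundary case $k=p-1$ is handled separately by the third line of \eqref{mrules}: for $\sigma(r_1,p-1)=(p-1)q+r_1\in[(p-1)q,pq]$, a direct computation gives $u_{2q-1}\cdot u_{(p-1)q+r_1}=u_{(p-1)q-r_1}=u_{\sigma(r_1,p-2)}$, matching $X_1 X_{p-1}=X_{p-2}$ in $K_p$.

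The only real obstacle is bookkeeping: one must align the three cases of \eqref{mrules} with the three regimes of the $K_p$-recursion. The base $k=1$ uses the first line of \eqref{mrules} (computing $u_{r_1}u_{2q-1}$ directly), the generic middle $1\le k\le p-2$ uses the middle line (computing $u_{2q-1}u_{\sigma(r_1,k)}$), and the boundary $k=p-1$ uses the third line. Once $\sigma$ is seen to intertwine the recursion of $K_p$ with the split fusion rule of $\overline G_{n+1}$, the proposition follows.
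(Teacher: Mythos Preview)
Your proposal is correct and follows essentially the same route as the paper: both identify the basis bijection via the products $u_{r_1}\cdot X_k$ and establish it by the recursion built into \eqref{mrules}, with your version simply making the bijection $\sigma$ and the induction explicit where the paper writes ``similarly, we get \ldots''. One minor remark: your ``boundary case $k=p-1$'' is redundant, since the inductive step with $k=p-2$ already produces $u_{r_1}X_{p-1}=u_{\sigma(r_1,p-1)}$, and the relation $X_1X_{p-1}=X_{p-2}$ holds automatically because $K_p$ is already a subring of $\overline G_{n+1}$.
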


\begin{corollary} \label{G factor}
For $n\ge 1$ we have an isomorphism of based rings:
$$\overline G_n=\overline G_1 \otimes K_p^{\otimes n-1}.$$
\end{corollary}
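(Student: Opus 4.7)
The plan is to proceed by induction on $n$, with Proposition \ref{next factor} supplying essentially the entire content of the argument. The only thing that needs to be observed is that the based ring $K_p$ appearing in Proposition \ref{next factor} is \emph{independent of $n$} (its structure constants with respect to the basis $X_0,\dots,X_{p-1}$ are the fixed recursion
$X_1X_i = X_{i-1}+X_i+X_{i+1}$ for $1 \le i \le p-2$, $X_1X_{p-1}=X_{p-2}$), so iterating the decomposition is meaningful.

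For the base case $n=1$, the claim reads $\overline G_1 = \overline G_1 \otimes K_p^{\otimes 0} = \overline G_1 \otimes \mathbb{Z}$, which holds tautologically. For the inductive step, assume $\overline G_n \cong \overline G_1 \otimes K_p^{\otimes n-1}$ as based rings. Applying Proposition \ref{next factor} at level $n$, we obtain
\begin{equation*}
\overline G_{n+1} \;=\; \overline G_n \otimes_{\mathbb{Z}} K_p \;\cong\; \bigl(\overline G_1 \otimes K_p^{\otimes n-1}\bigr) \otimes_{\mathbb{Z}} K_p \;=\; \overline G_1 \otimes K_p^{\otimes n},
\end{equation*}
using associativity of $\otimes_{\mathbb{Z}}$, which preserves the based-ring structure because the bases on each side are identified via pure tensors of basis vectors.

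There is no real obstacle here: all the work went into establishing the fusion rules \eqref{mrules} and identifying the based subring $K_p$ inside $\overline G_{n+1}$, culminating in Proposition \ref{next factor}. The corollary is just the formalization that this single-step factorization iterates cleanly.
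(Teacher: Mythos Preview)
Your proof is correct and is exactly the argument the paper intends: the corollary is stated without proof precisely because it follows by the trivial induction on $n$ from Proposition \ref{next factor}, together with the already-noted fact that the based ring $K_p$ is independent of $n$. There is nothing to add.
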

Recall that $\overline G_1={\rm Gr}(\Ver_p)$  is the Grothendieck ring of the Verlinde category.
Note also that this Corollary holds for all $p$, including $p=2$ as follows from Example \ref{p even}
(in this case $\overline G_1={\Bbb Z}$ and $K_2$ is the group ring ${\Bbb Z}[\Bbb Z/2]$ with its natural basis).

\begin{example} Here is a numerical example for Corollary \ref{G factor}. Let $p=5$ and consider
$u_{1023}\in \overline G_5$. We have
$$u_{1023}=u_{1249}u_{227}=u_{1249}u_{249}u_{23}=u_{1249}u_{249}u_{21}u_3.$$
Thus under the isomorphism of Corollary \ref{G factor} $u_{1023}$ corresponds to the element
$L_3\otimes X_4\otimes X_1\otimes X_1$.
\end{example}

\subsection{Categorifications of $K_p$} 
Let $\C_n$ be the semisimplification of the category $\Rep(\Bbb Z/{p^n})$.
Thus $\C_n$ is a symmetric fusion category with ${\rm Gr}(\C_n)=\overline G_n$. Proposition \ref{next factor}
implies that $\C_{n+1}\simeq \C_n\boxtimes \C_{n+1}^0$ where the category $\C_{n+1}^0$ satisfies
${\rm Gr}(\C_{n+1}^0)=K_p$. The goal of this section is to show that the categories $\C_{n}^0$ do not
depend on the value of $n\ge 2$.

More generally, we will show that the based ring $K_p$ has a unique
categorification by a symmetric fusion category $\D$ over a field of characteristic $p$. Here are two easy
cases:

(a) $p=2$: in this case the category $\D$ is pointed with underlying group $\Bbb Z/2$; thus possible categorifications
are given by homomorphisms $\Bbb Z/2\to {\bold k}^\times$, see e.g. \cite[Example 9.9.1]{EGNO}. Since $\mbox{char} {\bold k}=2$,
any such homomorphism is trivial and we have a unique categorification $\D={\rm Vec}_{\Bbb Z/2}$ (with trivial associator
and braiding).

(b) $p=3$: in this case $K_3$ has a basis $X_0, X_1, X_2$ with
$$X_0=1,\; X_1X_1=X_0+X_1+X_2,\; X_1X_2=X_1,\; X_2X_2=1$$
(thus $K_3\simeq {\rm Gr}(\Rep(S_3))$, where $\Rep(S_3)$ is the representation category of the symmetric group on three letters over the complex numbers). Let $\X_i\in \D, i=0,1,2$ be an object corresponding to $X_i\in K_p={\rm Gr}(\D)$.
It is clear that the dimension $\dim(\X_2)=1\in {\bold k}$ ($X_2^2=1$ implies that $\dim(\X_2)=\pm 1$ and
$\dim(\X_2)=-1$ would imply that $\dim(\X_1)=0$ which is impossible by \cite[Proposition 4.8.4]{EGNO}). 
Thus $\X_2$ generates a Tannakian subcategory of $\D$, equivalent to $\Rep(\Bbb Z/2)$. Then de-equivariantization
$\D_{\Bbb Z/2}$ of $\D$ with respect to this subcategory (see e.g. \cite[8.23]{EGNO}) is a fusion category of Frobenius-Perron dimension 3; hence pointed with underlying group $\Bbb Z/3$. Thus $\D_{\Bbb Z/2}={\rm Vec}_{\Bbb Z/3}$ with trivial
associator and braiding (since the only homomorphism $\Bbb Z/3\to {\bold k}^\times$ is the trivial one). Thus
the category $\D$ is the equivariantization $({\rm Vec}_{\Bbb Z/3})^{\Bbb Z/2}$ of the category ${\rm Vec}_{\Bbb Z/3}$ with respect to
an action of group $\Bbb Z/2$, see \cite[Theorem 8.23.3]{EGNO}. Note that the group $\Bbb Z/2$ must permute
simple objects of ${\rm Vec}_{\Bbb Z/3}$ nontrivially, since otherwise the category $({\rm Vec}_{\Bbb Z/3})^{\Bbb Z/2}$ is pointed.
Clearly, there is only one nontrivial homomorphism $\Bbb Z/2\to \Aut(\Bbb Z/3)$.
By the general theory (see \cite[Corollary 7.9]{ENOh} or \cite{Galindo}), actions of $\Bbb Z/2$ on ${\rm Vec}_{\Bbb Z/3}$ lifting
this homomorphism form a torsor over $H^2(\Bbb Z/2,\Bbb Z/3)=0$; thus there is only one such an action. 
Thus $\D \simeq ({\rm Vec}_{\Bbb Z/3})^{\Bbb Z/2}$ with respect to a unique action; in particular the category $\D$ is
unique up to a braided equivalence.

Assume now that $p>3$. Let $\D$ be a symmetric fusion category categorifying $K_p$ and let
$\X_i \in \D, i=0,\ldots ,p-1$ be the simple objects corresponding to $X_i\in K_p$.
Recall \cite[Theorem 1.5]{Ofib} that there exists a symmetric tensor functor $F: \D \to \Ver_p$
(this functor is known to be unique up to isomorphism, see \cite{EOV}). Let us determine
the homomorphism of based rings (i.e. sending basis elements of $K_p$ to positive integer combinations of basis elements of ${\rm Gr}(\Ver_p)$) $K_p={\rm Gr}(\D)\to {\rm Gr}(\Ver_p)$ induced by the functor $F$.

We have the following three homomorphisms $\phi_i: K_p\to {\rm Gr}(\Ver_p^+), i=1,2,3$:
$$\phi_1(X_1)=L_1+L_{p-2},\; \phi_2(X_1)=L_3,\; \phi_3(X_1)=-1,$$
and it is easy to see that these homomorphisms determine an isomorphism of $\BQ-$algebras 
$(K_p)_\BQ\simeq {\rm Gr}(\Ver_p^+)_\BQ \times {\rm Gr}(\Ver_p^+)_\BQ \times \BQ$ (where $A_\BQ :=A\otimes_{\Bbb Z} \BQ$).
Let us show that $\phi_1$ is the unique homomorphism of based rings $K_p\to {\rm Gr}(\Ver_p)$ (note that $\phi_2$ is not
a homomorphism of based rings since $\phi_2(X_{\frac{p-1}2})=0$).

\begin{proposition} \label{asexp}
Assume $p>3$.
Let $\D$ be a symmetric fusion category categorifying $K_p$ and let
$F: \D \to \Ver_p$ be a symmetric tensor functor. Then $F(\X_1)=L_1\oplus L_{p-2}$. In particular 
$\dim(\X_i)=(-1)^i$ and $S^2\X_1$ contains the unit object as a direct summand.
\end{proposition}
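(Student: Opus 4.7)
The plan is to pin down $Y := F(\X_1) \in \Gr(\Ver_p)$, after which both stated consequences become direct computations. By iterated application of the fusion rule $X_1 X_i = X_{i-1} + X_i + X_{i+1}$ for $1 \le i \le p-2$ in $K_p$, one obtains $F(\X_i) = P_i(Y)$ for polynomials determined by $P_0 = 1$, $P_1 = z$, and $P_{i+1} = (z-1)P_i - P_{i-1}$. A brief induction gives the closed form $P_n(z) = U_n(s) + U_{n-1}(s)$ with $s = (z-1)/2$ and $U_n$ the Chebyshev polynomial of the second kind. The terminal relation $X_1 X_{p-1} = X_{p-2}$ translates to $Y P_{p-1}(Y) = P_{p-2}(Y)$, and using $U_n = 2 s U_{n-1} - U_{n-2}$ this simplifies to $(Y+1)\, U_{p-1}((Y-1)/2) = 0$ in $\Gr(\Ver_p)$.

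Since $Y$ is a nonzero non-negative integer combination of simples of $\Ver_p$, the element $Y+1$ is nonzero, so $U_{p-1}((Y-1)/2) = 0$. I would then use the $\BQ$-algebra decomposition $(K_p)_\BQ \cong \Gr(\Ver_p^+)_\BQ \times \Gr(\Ver_p^+)_\BQ \times \BQ$ from the three characters $\phi_1, \phi_2, \phi_3$ to classify the possibilities for the map induced by $F$. The third factor cannot contribute nontrivially: a nonzero $\phi_3$-component would introduce sign-alternating summands $(-1)^i \cdot c$ in each $F(\X_i)$, incompatible with non-negativity of every $F(\X_i)$ simultaneously. An exclusive factoring through $\phi_2$ is ruled out because $\phi_2(X_{(p-1)/2}) = 0$ would force $F(\X_{(p-1)/2}) = 0$, contradicting that $\X_{(p-1)/2}$ is a nonzero simple of a fusion category.

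The hard part of the argument is ruling out ``mixed'' based ring homomorphisms --- for instance, $Y = L_1 + L_2$ satisfies the iterated fusion-ring constraints with non-negative $F(\X_i)$ for all $i$ (direct check). In the decomposition $\Gr(\Ver_p)_\BQ = \Gr(\Ver_p^+)_\BQ[\sigma]/(\sigma^2 - 1)$, with $\sigma = L_{p-1}$, such solutions correspond to pairing $\phi_1$ on the $+$-half with a Galois conjugate of $\phi_2$ on the $\sigma$-twisted half. To exclude these I would invoke the full symmetric tensor functor structure of $F$ (not merely the based ring data), together with the uniqueness of the fiber functor $\D \to \Ver_p$ up to isomorphism from \cite{EOV} and the compatibility of categorical dimensions $\dim \X_i = \dim_{\bold k} F(\X_i)$. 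These additional constraints single out the untwisted embedding, yielding $F_\ast = \phi_1$ and hence $F(\X_1) = L_1 \oplus L_{p-2}$.

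The remaining assertions are short computations in $\Ver_p$. Since $\dim L_{p-2} = p-2 \equiv -2$ in $\bold k$, we have $\dim \X_1 = 1 + (-2) = -1$; the dimension recursion induced by the fusion rules then yields $\dim \X_i = (-1)^i$. For the symmetric square, $S^2 L_{p-2}$ has categorical dimension $(-2)(-1)/2 = 1$, so $S^2 L_{p-2} = L_1$, and
\[
S^2(L_1 \oplus L_{p-2}) \;=\; S^2 L_1 \oplus (L_1 \otimes L_{p-2}) \oplus S^2 L_{p-2} \;=\; 2L_1 \oplus L_{p-2} \;=\; F(\X_0) \oplus F(\X_1),
\]
which forces $S^2 \X_1 = \X_0 \oplus \X_1$. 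In particular, the unit object $\X_0$ is a direct summand of $S^2 \X_1$, as claimed.
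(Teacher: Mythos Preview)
Your argument has a genuine gap at exactly the point you flag as ``the hard part.'' You correctly observe that $Y = L_1 \oplus L_2$ defines a valid based ring homomorphism $K_p \to \Gr(\Ver_p)$ surviving all your Grothendieck-ring constraints, and then write only that you ``would invoke'' uniqueness of the fiber functor and compatibility of categorical dimensions. But uniqueness of $F$ is a red herring here: it says there is only one fiber functor, not which based-ring map that functor induces. The dimension compatibility is the right tool, yet you never actually use it. The missing computation is short: if $F(\X_1) = L_1 \oplus L_2$ then $\dim_{\bold k} \X_1 = 3$, and the recursion $\dim \X_{i+1} = (\dim \X_1 - 1)\dim \X_i - \dim \X_{i-1}$ gives $\dim \X_i = 2i+1$, whence $\dim \X_{(p-1)/2} = p = 0$, contradicting nonvanishing of dimensions of simples in a fusion category. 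Without this, $L_1 \oplus L_2$ is not excluded and the proof is incomplete. There is also a smaller slip: from $(Y+1)\,U_{p-1}((Y-1)/2)=0$ and $Y+1 \neq 0$ you cannot conclude $U_{p-1}((Y-1)/2)=0$, since $\Gr(\Ver_p)_\BQ \cong \Gr(\Ver_p^+)_\BQ \times \Gr(\Ver_p^+)_\BQ$ has zero divisors.

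The paper's route is much shorter and avoids the polynomial machinery entirely. Since $F$ preserves Frobenius--Perron dimension and $\FP(\X_1) = 1 + [2]_q$, there are exactly four candidates for $F(\X_1)$: $L_1 \oplus L_{p-2}$, $L_{p-1} \oplus L_{p-2}$, $L_{p-1} \oplus L_2$, and $L_1 \oplus L_2$. The middle two fail because $X_1$ occurs in $X_1^2$ but neither of those objects occurs in its own tensor square; the last is eliminated by the dimension argument above. For the final assertion the paper simply notes $F(\wedge^2\X_1) = L_{p-2} \oplus L_3$ contains no copy of $L_1$, so the unit summand of $\X_1^{\otimes 2}$ must lie in $S^2\X_1$.
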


\begin{proof} 
It is easy to verify that there exists a ring homomorphism
$$K_p\;\to\;\mathbb{R},\quad X_i\mapsto [i+1]_q+[i]_q.$$
Since it sends the basis to positive numbers, we see that $\FP(X_1)=1+[2]_q$.
Thus we have the following options:

(1) $F(\X_1)=L_1\oplus L_{p-2}$,

(2) $F(\X_1)=L_{p-1}\oplus L_{p-2}$,

(3) $F(\X_1)=L_{p-1}\oplus L_{2}$,

(4) $F(\X_1)=L_1\oplus L_{2}$.

Options (2) and (3) are impossible since $F(\X_1)^{\otimes 2}$ would not contain $F(\X_1)$, but 
$X_1^2=X_0+X_1+X_2$. Option (4) would imply $\dim(\X_1)=3$ whence induction in $i$ gives 
$\dim(\X_i)=2i+1$ and $\dim(\X_{\frac{p-1}2})=p=0$ which is a contradiction, see e.g. \cite[Lemma 2.4.1]{BK}.
Thus the only possibility is option (1). This implies $\dim(\X_1)=-1$ and the formula for $\dim(\X_i)$ follows
by induction in $i$. Finally we see that $F(\wedge^2\X_1)=\wedge^2F(\X_1)=L_{p-2}\oplus L_3$, so
$\wedge^2\X_1$ does not contain the unit object as a summand. We get the last statement since
$\be$ is contained in $\X_1^{\otimes 2}=S^2\X_1\oplus \wedge^2\X_1$.
\end{proof}

Let $\delta \in {\bold k}$ and let $\uRep_0(O(\delta),{\bold k})$ be the {\em Brauer category} as defined in
\cite[9.3]{DeSymm} and let $\uRep(O(\delta),{\bold k})$ be its Karoubian envelope; we will denote
by $\bullet$ the generating object of $\uRep(O(\delta),{\bold k})$. Recall (see \cite[9.4]{DeSymm})
that the category $\uRep(O(\delta),{\bold k})$ has the following universal property.

\begin{proposition}\label{(c)} Let $\cA$ be a Karoubian symmetric pseudo-tensor category.
The functor $G\mapsto G(\bullet)$ gives an equivalence of the following categories:

(i) ${\bold k}-$linear symmetric monoidal functors $G: {\rm Rep}(O(\delta),{\bold k})\to \cA$; and

(ii) objects of $\cA$ of dimension $\delta$ equipped with non-degenerate symmetric bilinear form
and invertible morphisms thereof.
\end{proposition}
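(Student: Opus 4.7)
The plan is to reduce the universal property to a generators-and-relations presentation of $\uRep_0(O(\delta),\bold k)$, and then transport it across the Karoubian envelope. Concretely, I would use that $\uRep_0(O(\delta),\bold k)$ is the free $\bold k$-linear symmetric pseudo-tensor category on one object $\bullet$ together with morphisms $\ev\colon \bullet\otimes \bullet\to \be$, $\coev\colon \be\to \bullet\otimes \bullet$, and the symmetry $\sigma\colon \bullet\otimes\bullet\to \bullet\otimes\bullet$, modulo the relations (a) $\sigma$ provides a symmetric braiding on the category generated by $\bullet$, (b) the two zigzag identities relating $\ev$ and $\coev$, (c) $\ev\circ \sigma=\ev$, and (d) $\ev\circ \coev=\delta\cdot \id_\be$. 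Equivalently, the morphism spaces are spanned by Brauer diagrams with composition picking up a factor of $\delta$ for every closed loop; this is the content of \S 9.3 of \cite{DeSymm}.

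In one direction, given a $\bold k$-linear symmetric monoidal functor $G\colon \uRep(O(\delta),\bold k)\to \cA$, I would set $X:=G(\bullet)$ and $\beta:=G(\ev)$. Relation (c) forces $\beta$ to be symmetric; the zigzag identities (b) exhibit $G(\coev)$ as witnessing $X$ as self-dual with respect to $\beta$, so $\beta$ is non-degenerate; and relation (d) gives $\dim X=\delta$. Natural transformations between two such functors restrict to a morphism $G(\bullet)\to G'(\bullet)$ intertwining the forms. Conversely, given $(X,\beta)$ with $\dim X=\delta$ and $\beta$ symmetric and non-degenerate, I would use non-degeneracy to produce $\beta^{-1}\colon \be\to X\otimes X$, and then assign $\bullet\mapsto X$, $\ev\mapsto \beta$, $\coev\mapsto \beta^{-1}$, and $\sigma$ to the canonical symmetric braiding of $\cA$. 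A direct check shows relations (a)--(d) hold in $\cA$, so one obtains a unique $\bold k$-linear symmetric monoidal functor out of $\uRep_0(O(\delta),\bold k)$, which then extends uniquely to the Karoubian envelope $\uRep(O(\delta),\bold k)$ because $\cA$ is Karoubian.

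The main work is hidden in the presentation of $\uRep_0(O(\delta),\bold k)$: one has to know that no relation beyond (a)--(d) is imposed on morphism spaces, equivalently that Brauer diagrams on $n+m$ points form a basis of $\Hom(\bullet^{\otimes n},\bullet^{\otimes m})$. This combinatorial fact, together with the compatibility of composition with the loop weight $\delta$, is built into the Deligne definition \cite{DeSymm} and needs no further proof. Beyond this, both directions of the equivalence are formal, and invertibility of natural isomorphisms corresponds exactly to invertibility of the induced isometries on $X$.
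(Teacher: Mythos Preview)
The paper does not actually prove this proposition: it is stated as a recollection of the universal property from \cite[9.4]{DeSymm}, with no argument given. Your sketch is the standard way one establishes such a universal property for a diagrammatic category---reduce to the generators-and-relations presentation of $\uRep_0(O(\delta),\bold k)$ via Brauer diagrams, verify that the data $(X,\beta)$ with $\dim X=\delta$ and $\beta$ symmetric non-degenerate satisfies exactly the defining relations, and then pass to the Karoubian envelope---and it is correct in outline. The only substantive input, as you note, is that Brauer diagrams really do form a basis of the Hom-spaces, which is Deligne's definition in \cite[9.3]{DeSymm}; everything else is formal.
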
 

Let $\uRep^{ss}(O(\delta),{\bold k})$ denote the quotient of the category $\uRep(O(\delta),{\bold k})$ by 
the negligible morphisms, see e.g. \cite[6.1]{DeSymm} (the category $\uRep^{ss}(O(\delta),{\bold k})$ is
sometimes called the {\em semisimplification} of $\uRep(O(\delta),{\bold k})$; however see Remark \ref{semise}
below). For typographic reasons, we do not employ the notation $\mathcal{C}\mapsto \overline{\mathcal{C}}$ we used for the semisimplificiation of tensor categories in earlier sections.

\begin{theorem} \label{delta-1}
Assume that $p>3$. Then the category ${\rm Rep}^{ss}(O(-1),{\bold k})$ is semisimple and
we have an isomorphism of based rings $${\rm Gr}({\rm Rep}^{ss}(O(-1),{\bold k}))\cong K_p\otimes {\Bbb Z}[\Bbb Z/2].$$
\end{theorem}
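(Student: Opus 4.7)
The plan is to realize $\uRep^{ss}(O(-1),{\bold k})$ as equivalent to $\D\boxtimes \mathcal{E}$, where $\mathcal{E}$ is the unique pointed symmetric fusion category categorifying $\mathbb{Z}[\mathbb{Z}/2]$ inherited from the (super) parity of $OSp(1|2)$ (either $\mathrm{sVec}_{\bold k}$ or $\mathrm{Vec}_{\mathbb{Z}/2}$ with trivial braiding). A single such equivalence would yield both semisimplicity and the Grothendieck-ring identification at once.

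First, by Proposition \ref{asexp}, $\X_1\in \D$ has dimension $-1$ and the inclusion $\be\subseteq S^2\X_1$ provides a non-degenerate symmetric bilinear form $\X_1\otimes \X_1\to \be$. Hence in $\D\boxtimes\mathcal{E}$ there is an object $Y$ of dimension $-1$ equipped with a non-degenerate symmetric bilinear form (take $Y=\X_1\boxtimes\be$, possibly twisted by the nontrivial invertible of $\mathcal{E}$ if needed to match symmetry conventions). Proposition \ref{(c)} then yields a symmetric monoidal functor
$$
F\colon \uRep(O(-1),{\bold k})\to \D\boxtimes\mathcal{E}
$$
sending $\bullet\mapsto Y$. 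Since $\D\boxtimes\mathcal{E}$ is semisimple, $F$ kills all negligible morphisms and so descends to $\bar F\colon \uRep^{ss}(O(-1),{\bold k})\to \D\boxtimes\mathcal{E}$.

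Next I would show $\bar F$ is a symmetric tensor equivalence. For essential surjectivity, the key is to locate the generator of $\mathcal{E}$ on the source side: a natural candidate is the summand $\wedge^2\bullet\subset \bullet^{\otimes 2}$, which has Brauer-categorical dimension $\binom{-1}{2}=1$ and should become invertible of order two in the semisimplification, mapping under $\bar F$ to the nontrivial invertible of $\mathcal{E}$. Combined with the image of $\bullet$ (which generates the $\D$-factor by Proposition \ref{asexp}), this would produce all simples of $\D\boxtimes\mathcal{E}$. For fully faithfulness, I would compare Hom spaces: on the source, $\Hom(\bullet^{\otimes n},\bullet^{\otimes m})$ modulo negligibles is the corresponding Brauer-algebra morphism space modulo its negligible ideal; on the target it is computable from the tensor structure of $\D$ and the trivial structure of $\mathcal{E}$. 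A dimension count, together with the automatic surjectivity of $\bar F$ on Hom spaces, would complete the argument.

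The main obstacle is the fully-faithfulness step, which requires a precise grip on the Brauer algebras $B_n(-1)$ over ${\bold k}$ modulo their negligible ideals. A secondary technical point is to verify that $\wedge^2\bullet$ does survive in the semisimplification as an invertible of order two and that the resulting $\mathbb{Z}/2$-grading matches the parity grading of the Brauer category. Once these are in place, semisimplicity of $\uRep^{ss}(O(-1),{\bold k})$ and the identity $\mathrm{Gr}(\uRep^{ss}(O(-1),{\bold k}))\cong K_p\otimes\mathbb{Z}[\mathbb{Z}/2]$ drop out from the equivalence with $\D\boxtimes\mathcal{E}$.
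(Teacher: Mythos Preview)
Your overall strategy---construct a symmetric tensor functor from $\uRep(O(-1),{\bold k})$ to $\D\boxtimes\mathcal{E}$ via the universal property and then prove it is an equivalence---is the same as the paper's, but several of your concrete steps either fail or skip the actual content.

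First, the claim that ``since $\D\boxtimes\mathcal{E}$ is semisimple, $F$ kills all negligible morphisms'' is not automatic: a tensor functor only sends a negligible $f$ to a morphism that pairs to zero against $F(g)$ for $g$ in the source, which does not force $F(f)=0$ unless $F$ is full. The paper circumvents this by composing with the fiber functor $\D'\to\Ver_p$ and comparing with the functor $\uRep(O(-1),{\bold k})\to\T^{ss}(O(p-1))\to\Ver_p$; both send $\bullet$ to $L_1\oplus L_{p-2}$ with its (unique up to isomorphism) symmetric form, hence are isomorphic, and since the second composition visibly kills negligibles while $\D'\to\Ver_p$ is faithful, the functor to $\D'$ does too.

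Second, your candidate for the nontrivial invertible is wrong. With $\mathcal{E}=\Vec_{\Bbb Z/2}$ and $Y=\X_1\boxtimes a$ (the only choice that both has dimension $-1$ and generates the target), one has $\wedge^2\bullet\mapsto\wedge^2\X_1\boxtimes\be$. From $X_1^2=X_0+X_1+X_2$, $\dim\X_i=(-1)^i$, and $\be\subset S^2\X_1$ one computes $S^2\X_1=\X_0\oplus\X_1$ and $\wedge^2\X_1=\X_2$, which is \emph{not} invertible for $p>3$. Essential surjectivity does hold, but simply because $\X_1\boxtimes a$ tensor-generates $\D'$, not via $\wedge^2\bullet$.

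Third, the fully-faithfulness step you flag as the ``main obstacle'' is exactly where the substance lies, and a direct dimension count on Brauer algebras modulo negligibles in characteristic $p$ is not presently available. The paper avoids this entirely: it first establishes semisimplicity of $\uRep^{ss}(O(-1),{\bold k})$ and an embedding into $\T^{ss}(O(p-1))$ independently (Remark~\ref{semise}(iii), via the first fundamental theorem and \cite{AK2}), and then uses a Frobenius--Perron dimension comparison. Lemma~\ref{levrank}, a level-rank duality computation reducing to a known trigonometric identity, gives $\FP(\T^{ss}(O(p-1)))=\FP(\D')$; hence the surjective tensor functor $\uRep^{ss}(O(-1),{\bold k})\to\D'$ between fusion categories of equal Frobenius--Perron dimension is an equivalence by \cite[Proposition 6.3.4]{EGNO}. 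This FP-dimension/level-rank argument is the missing idea in your outline.
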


\begin{remark}\label{semise}
 (i) If $\mbox{char} {\bold k}=0$, all the categories $\uRep^{ss}(O(\delta),{\bold k})$ are semisimple
and were computed in \cite[Th\'eor\`em 9.6]{DeSymm}.

(ii) If $\mbox{char} {\bold k} >0$ then the category $\uRep^{ss}(O(\delta),{\bold k})$ is semisimple if and only if
$\delta$ is an element of the prime subfield $\BF_p\subset {\bold k}$. The semisimplicity implies $\delta \in \BF_p$
by \cite[Lemma 2.2]{EHO}. Conversely, if $\delta \in \BF_p$, then using the universal property one
constructs a monoidal functor $\uRep(O(\delta),{\bold k})\to {\rm Vec}$ sending the generating object $\bullet$ to
a vector space of dimension $n\equiv \delta \pmod{p}$. Then the semisimplicity of $\uRep^{ss}(O(\delta),{\bold k})$
follows by \cite{AK2}.

(iii) Let $p>2$, $\delta \in \BF_p$. Let $n\in {\Bbb Z}_{\ge 0}$ satisfy $n\equiv \delta \pmod{p}$. Let $\Rep(O(n))$ be
the category of representations of the orthogonal group $O(n)$ (considered as an algebraic group) over the field ${\bold k}$. By the universal
property above, there is a symmetric tensor functor $\uRep(O(\delta),{\bold k})\to \Rep(O(n))$ sending the 
object $\bullet$ to the tautological representation of $O(n)$. The characteristic free first fundamental 
theorem of invariant theory (proved in \cite{dCP}) states that this functor is full, i.e. surjective on Hom's. 
It follows that $\uRep^{ss}(O(\delta),{\bold k})$ equals the semisimplification of the image in $\Rep(O(n))$ of this
functor. This image is contained in the subcategory $\T (O(n))=\mathrm{Tilt}_p(O(n))$ of $\Rep(O(n))$ formed by {\em tilting modules}
(we say that an $O(n)-$module is tilting if its restriction to the connected subgroup $SO(n)$ is tilting). Thus
$\uRep^{ss}(O(\delta),{\bold k})$ is equivalent to the subcategory of the semisimplification $\T^{ss}(O(n))$ generated
by the image of the tautological representation. One can show that
if we choose $n$ to satisfy $0\le n<p$, then the functor $\uRep^{ss}(O(\delta),{\bold k})\to \T^{ss}(O(n))$ is surjective
and hence we have an equivalence $\uRep^{ss}(O(\delta),{\bold k})\simeq \T^{ss}(O(n))$ (we will prove this for $\delta =-1$
below). This implies that $\uRep^{ss}(O(\delta),{\bold k})$ has just finitely many simple objects for $\delta \ne 2$
(so it is a fusion category); however for $\delta =2$ we have 
$\uRep^{ss}(O(\delta),{\bold k})\simeq \T^{ss}(O(2))=\Rep(O(2))$ and this category has infinitely many simple objects.
In particular, Theorem \ref{delta-1} does fail for $p=3$.

(iv) For any $p$ (including $p=2$) we have $\uRep^{ss}(O(0),{\bold k})={\rm Vec}$ and $\uRep^{ss}(O(1),{\bold k})={\rm Vec}_{\Bbb Z/2}$.
\end{remark}

\begin{proof}[Proof of Theorem~\ref{delta-1}] By Remark \ref{semise} (iii) we already know that the category $\uRep^{ss}(O(-1),{\bold k})$ is semisimple
and has finitely many simple objects, so it remains to compute its Grothendieck ring.

We have a canonical functor $\uRep(O(-1),{\bold k})\to \uRep^{ss}(O(-1),{\bold k})$. By Remark \ref{semise}
(iii) we have an injective tensor functor $\uRep^{ss}(O(-1),{\bold k})\to \T^{ss}(O(p-1))$. Let $\T^{ss}(O(p-1))\to \Ver_p$ be
the fiber functor. Using \cite[4.3.3]{Ofib} we see that the composition 
$$\uRep(O(-1),{\bold k})\to \uRep^{ss}(O(-1),{\bold k})\to \T^{ss}(O(p-1))\to \Ver_p$$
sends the standard object $\bullet$ to $L_1\oplus L_{p-2}$ (in other words the principal nilpotent element acts
on the tautological representation of $O(p-1)$ with Jordan blocks of sizes $p-2$ and 1).

Now let $\D$ be a symmetric fusion category with ${\rm Gr}(\D)=K_p$ and let $\D'=\D \boxtimes {\rm Vec}_{\Bbb Z/2}$.
The object $\X_1\boxtimes a \in \D'$ where $a\in {\rm Vec}_{\Bbb Z/2}$ is the nontrivial simple object has a symmetric bilinear form (see Proposition \ref{asexp}); thus by the
universal property we have a tensor functor $\uRep(O(-1),{\bold k})\to \D'$ sending the object $\bullet$ to
$\X_1\boxtimes a$. Composing it with the fiber functor $\D' \to \D \to \Ver_p$ we see that the composition 
sends the object $\bullet$ to $L_1\oplus L_{p-2}\in \Ver_p$. 

It is obvious that $L_1\oplus L_{p-2}\in \Ver_p$ has a unique up to isomorphism non-degenerate symmetric
bilinear form. Thus it follows from the universal property that the above two compositions are isomorphic
tensor functors. In particular, these compositions annihilate the same morphisms. Since the functor
$\D'\to \Ver_p$ is faithful, we see that the functor $\uRep(O(-1),{\bold k})\to \D'$ annihilates all the negligible
morphisms. Hence this functor factors through the functor 
$$
\uRep(O(-1),{\bold k})\to \uRep^{ss}(O(-1),{\bold k})
$$ 
and we have a symmetric tensor functor $\uRep^{ss}(O(-1),{\bold k})\to \D'$ sending the standard object $\bullet$
of $\uRep^{ss}(O(-1),{\bold k})$ to $\X_1\boxtimes a$. The object $\X_1\boxtimes a$ generates $\D'$, so this
functor is surjective. On the other hand, Lemma \ref{levrank} shows that
$$\FP(\uRep^{ss}(O(-1),{\bold k}))\le \FP(\T^{ss}(O(p-1)))=\FP(\D').$$
Thus the surjective functor $\uRep^{ss}(O(-1),{\bold k})\to \D'$ (as well as injective functor $\uRep^{ss}(O(-1),{\bold k})\to \T^{ss}(O(p-1)))$ is an equivalence by \cite[Proposition 6.3.4]{EGNO} and the proof is complete.
\end{proof} 

\begin{lemma}\label{levrank}
Assume $p>3$. We have $\FP(\T^{ss}(O(p-1)))=\FP(\D').$
\end{lemma}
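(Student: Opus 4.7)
The plan is to compute both Frobenius-Perron dimensions and match them to the common value $\FPdim(\Ver_p(\mathrm{Spin}(p-1)))=\FPdim(\T^{ss}(\mathrm{Spin}(p-1)))$.

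For the right side, I invoke the identifications $\D\cong\Ver_p(\mathrm{OSp}(1|2))\cong\Ver_p^+(\mathrm{Spin}(p-1))^{\Z/2}$ discussed in Section~4 (which rest on the level-rank duality $\Ver_p^+(\mathrm{Spin}(p-1))\cong\Ver_p^+(\mathrm{SOSp}(1|2))$), together with the decomposition $\Ver_p(\mathrm{Spin}(p-1))\cong\Ver_p^+(\mathrm{Spin}(p-1))\boxtimes\Rep(Z,z)$ with $|Z(\mathrm{Spin}(p-1))|=4$. Since $\Z/2$-equivariantization doubles FPdim and the Deligne product with $\Rep(Z,z)$ multiplies it by $|Z|=4$, this yields $\FPdim(\D')=2\FPdim(\D)=4\FPdim(\Ver_p^+(\mathrm{Spin}(p-1)))=\FPdim(\Ver_p(\mathrm{Spin}(p-1)))$.

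For the left side, I use the tower $\mathrm{Spin}(p-1)\twoheadrightarrow SO(p-1)\hookrightarrow O(p-1)$. The index-$2$ extension $O(2m)=SO(2m)\rtimes\Z/2$ with standard Clifford analysis --- self-dual $SO$-simples extend to pairs of $O$-simples of equal dimension, while outer-swapped $SO$-pairs induce to single $O$-simples of doubled dimension --- gives $\FPdim(\T^{ss}(O(p-1)))=2\FPdim(\T^{ss}(SO(p-1)))$. For the quotient $\mathrm{Spin}\twoheadrightarrow SO$, the simples of $\T^{ss}(SO(p-1))$ are exactly those of $\Ver_p(\mathrm{Spin}(p-1))$ on which the central kernel $\langle z\rangle\cong\Z/2$ acts trivially; tensoring with the invertible half-spin object (which lies in the level-$3$ alcove of $D_m$ at $p=2m+1$ since the half-spin fundamental weight has mark $1$) provides a free, FPdim-preserving $\Z/2$-action on these simples that flips the $z$-character, so both $z$-strata carry equal total FPdim. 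Hence $\FPdim(\T^{ss}(SO(p-1)))=\tfrac12\FPdim(\Ver_p(\mathrm{Spin}(p-1)))$ and $\FPdim(\T^{ss}(O(p-1)))=\FPdim(\Ver_p(\mathrm{Spin}(p-1)))$, matching the right side.

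The main obstacle is establishing the identification $\D\cong\Ver_p^+(\mathrm{Spin}(p-1))^{\Z/2}$ rigorously within the proof, without relying on the conjectural structural picture of Section~4. A cleaner self-contained alternative is to bypass this identification entirely: compute $\FPdim(\D')=p/\sin^2(\pi/(2p))$ directly from the fusion ring $K_p$ (using $\FP(X_i)=[i+1]_q+[i]_q=[2i+1]_{Q}$ with $q=e^{i\pi/p}$, $Q=q^{1/2}$, and the identity $\sum_{i=0}^{p-1}\sin^2((2i+1)\pi/(2p))=p/2$), then compute $\FPdim(\T^{ss}(\mathrm{Spin}(p-1)))$ via the Weyl/quantum dimension formula applied to $D_m$-alcove weights at level $p-h=3$ and verify numerical equality.
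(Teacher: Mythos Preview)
Your primary argument is circular, and you correctly flag this. At this point in Section~5, $\D$ denotes an \emph{arbitrary} symmetric fusion category with Grothendieck ring $K_p$; the lemma is a step in the proof of Theorem~\ref{delta-1}, which in turn establishes that any such $\D$ is the specific category $\Ver_p^+(\mathrm{Spin}(p-1))^{\Z/2}$. Invoking that identification to compute $\FP(\D')$ presupposes the conclusion. (The forward reference in Section~4 is descriptive, not a proof.)

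Your alternative---computing $\FP(\D')$ directly from $K_p$ and matching it to $\FP(\T^{ss}(\mathrm{Spin}(p-1)))$---is the correct route and is essentially what the paper does. The left-side tower argument agrees with the paper's, except that the paper uses the cleaner observation that $\T^{ss}(SO(p-1))$ is the neutral component of a faithful $\Z/2$-grading on $\T^{ss}(\mathrm{Spin}(p-1))$, so the two components automatically have equal $\FPdim$; there is no need for an invertible half-spin object. In fact the half-spin $L(\omega_m)$ is \emph{not} invertible here (its $\FPdim$ exceeds $1$), so your ``free, $\FPdim$-preserving $\Z/2$-action by tensoring'' does not work as stated---the invertibles in that grading component are $L(3\omega_{m-1})$ and $L(3\omega_m)$, not the fundamental spinors.

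For the right side, the paper avoids a from-scratch Weyl-alcove computation by citing existing results: by \cite{GM} the fusion rules of $\T^{ss}(\mathrm{Spin}(p-1))$ coincide with those of the complex affine category $\C(\mathfrak{so}(p-1),3)$, and the fusion ring $K_p$ is visibly that of the even part of $\C(\mathfrak{sl}(2),2p-2)$, so $\FP(\D')=\FP(\C(\mathfrak{sl}(2),2p-2))$. The remaining equality $\FP(\C(\mathfrak{so}(p-1),3))=\FP(\C(\mathfrak{sl}(2),2p-2))$ is the characteristic-zero level-rank duality, reduced to a trigonometric identity proved in \cite{OW}. Your direct computation of $\FP(\D')=p/\sin^2(\pi/(2p))$ is correct and reproduces one side of that identity; but you only promise the $\mathrm{Spin}$-side computation without carrying it out, so the alternative as written is incomplete.
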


\begin{proof} Let $\T(SO(p-1))$ be the category of tilting modules over group $SO(p-1)$ and let 
$\T^{ss}(SO(p-1))$ be its semisimplification. The exact sequence of algebraic groups
$$1\to SO(p-1)\to O(p-1)\to \Bbb Z/2\to 1$$
shows that the category $\T(O(p-1))$ can be obtained from the category $\T(SO(p-1))$ via equivariantization with
respect to $\Bbb Z/2-$action, $\T(O(p-1))=\T(SO(p-1))^{\Bbb Z/2}$ see e.g. \cite[4.15]{EGNO}. It follows
(see \cite[Proposition 3.15]{EO}) that $\T^{ss}(O(p-1))=\T^{ss}(SO(p-1))^{\Bbb Z/2}$. Hence
$$\FP(\T^{ss}(O(p-1)))=2\FP(\T^{ss}(SO(p-1))).$$

Now let $\T({\rm Spin}(p-1))$ be the category of tilting modules over the spin group ${\rm Spin}(p-1)$ and let 
$\T^{ss}({\rm Spin}(p-1))$ be its semisimplification. The exact sequence of algebraic groups
$$1\to \Bbb Z/2\to {\rm Spin}(p-1)\to SO(p-1)\to 1$$
determines a $\Bbb Z/2-$grading on the category $\T({\rm Spin}(p-1))$ with neutral component $\T(SO(p-1))$;
hence $\T^{ss}(SO(p-1))$ is a neutral component of $\Bbb Z/2-$grading on $\T^{ss}({\rm Spin}(p-1))$. Thus
$$\FP(\T^{ss}(O(p-1)))=2\FP(\T^{ss}(SO(p-1)))
$$
$$
=\FP(\T^{ss}({\rm Spin}(p-1))).
$$

According to \cite{GM} the fusion rules (and hence Frobenius-Perron dimension) of the category
$\T^{ss}({\rm Spin}(p-1))$ coincide with those of the $\BC-$linear fusion category $\C(\mathfrak{so}(p-1),3)$ associated with affine Lie algebra $\mathfrak{so}(p-1)$ at level $3$. Also it is well known that the fusion rules of the category $\D$ coincide with 
the fusion rules of the even part of $\C(\mathfrak{sl}(2),2p-2)$; in particular
$\FP(\D')=\FP(\C(\mathfrak{sl}(2),2p-2))$. Thus the statement of the Lemma is equivalent to
the following:
$$\FP(\C(\mathfrak{so}(p-1),3))=\FP(\C(\mathfrak{sl}(2),2p-2)).$$
Equation \eqref{levrank}  is a manifestation of the {\em level rank duality} connecting affine Lie algebras
$\mathfrak{so}(p-1)$ at level 3 and $\mathfrak{so}(3)$ at level $p-1$ (which, in view of standard conventions, identifies with
$\mathfrak{sl}(2)$ at level $2p-2$). It can be reduced to a trigonometric identity using \cite[Theorem 3.3.20]{BK};
we refer the reader to \cite[Corollary 1.9]{OW} for a nice elementary proof of this identity (use formula (i) 
with $U'=\{ \frac12\}$ and take into account \cite[Remark 1.10]{OW}).
\end{proof}

\begin{remark} A more natural approach to the proof of Lemma \ref{levrank} would be to use a positive characteristic version of the level-rank duality which we discussed in Section 4. This duality connects the category $\T^{ss}(O(p-1))$ with the semisimplification $\T^{ss}(SOSp(1|2))$ of the category of tilting
modules over supergroup $SOSp(1|2)$ of orthogonal transformations of the superspace of superdimension $1|2$. However, the theory of tilting modules over  $SOSp(1|2)$ is not well documented, so we chose to invoke the better documented characteristic zero results. 
\end{remark}

The proof of Theorem \ref{delta-1} yields 

\begin{corollary} Let $\D$ be a symmetric fusion category such that ${\rm Gr}(\D)=K_p$. Then $\D$ is equivalent
to the neutral component of the standard $\Bbb Z/2-$grading of the category ${\rm Rep}^{ss}(O(-1),{\bold k})$.
\end{corollary}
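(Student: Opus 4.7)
The plan is to upgrade the equivalence produced in the proof of Theorem~\ref{delta-1} to a graded equivalence and then pass to neutral components. Let $\Phi : \uRep^{ss}(O(-1),{\bold k}) \xrightarrow{\sim} \D \boxtimes {\rm Vec}_{\Bbb Z/2}$ be the equivalence constructed there, which by construction sends the standard generator $\bullet$ to $\X_1\boxtimes a$, where $a$ is the nontrivial simple object of ${\rm Vec}_{\Bbb Z/2}$.

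Next, I would describe the two $\Bbb Z/2$-gradings that are to be matched. On $\uRep^{ss}(O(-1),{\bold k})$, the standard grading is the one inherited from $\uRep(O(-1),{\bold k})$: in the Brauer category the space of morphisms between $\bullet^{\otimes n}$ and $\bullet^{\otimes m}$ vanishes unless $n\equiv m\pmod 2$, so the objects split according to the parity of the tensor power, and this grading descends to the Karoubian envelope and to the semisimplification (since negligible morphisms are already homogeneous). On $\D \boxtimes {\rm Vec}_{\Bbb Z/2}$, there is the tautological $\Bbb Z/2$-grading coming from the second tensor factor, with trivial component $\D \boxtimes \Vec \simeq \D$ and nontrivial component $\D \boxtimes \langle a\rangle$.

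The key step is to verify that $\Phi$ intertwines these two gradings. This is essentially immediate: $\Phi(\bullet^{\otimes n}) = (\X_1\boxtimes a)^{\otimes n} = \X_1^{\otimes n}\boxtimes a^{\otimes n}$ lies in the trivial (resp.\ nontrivial) component of the ${\rm Vec}_{\Bbb Z/2}$-grading on $\D\boxtimes{\rm Vec}_{\Bbb Z/2}$ according to whether $n$ is even or odd. Since every simple object of $\uRep^{ss}(O(-1),{\bold k})$ is a direct summand of some $\bullet^{\otimes n}$, its component under the standard grading is determined by the parity of $n$, and therefore $\Phi$ sends the trivial component of the standard grading into $\D\boxtimes\Vec$ and the nontrivial component into $\D\boxtimes\langle a\rangle$.

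Restricting $\Phi$ to the trivial components then yields an equivalence of symmetric fusion categories between the neutral component of the standard $\Bbb Z/2$-grading on ${\rm Rep}^{ss}(O(-1),{\bold k})$ and $\D\boxtimes\Vec \simeq \D$, proving the corollary. No step looks like a serious obstacle: the only thing to check carefully is that the standard grading is well-defined after semisimplification, and this follows because negligibility is preserved within each homogeneous component of the morphism space.
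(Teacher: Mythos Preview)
Your proposal is correct and is exactly the argument the paper has in mind: the paper states only that ``the proof of Theorem~\ref{delta-1} yields'' the corollary, and you have faithfully unpacked this by observing that the equivalence $\Phi:\uRep^{ss}(O(-1),{\bold k})\to \D\boxtimes{\rm Vec}_{\Bbb Z/2}$ constructed there sends $\bullet$ to $\X_1\boxtimes a$ and hence is automatically $\Bbb Z/2$-graded, so restricting to neutral components gives the desired equivalence.
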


Recall that $\C_n$ is the semisimplification of the category $\Rep(\Bbb Z/{p^n})$. The following can be seen as an example of Conjecture~\ref{mai}.

\begin{corollary} We have an equivalence of symmetric fusion categories
$$\C_n\simeq \Ver_p\boxtimes \D^{\boxtimes n-1},$$
where
$$\D\;=\;\begin{cases} \mathrm{Vec}_{\Z/2}&\quad\mbox{if $p=2$},\\
\left(\mathrm{Vec}_{\Z/3}\right)^{\Z/2}&\quad\mbox{if $p=3$},\\
\left(\Ver_p^+(\mathrm{Spin}(p-1))\right)^{\Z/2}&\quad\mbox{if $p>3$.}
\end{cases}$$
\end{corollary}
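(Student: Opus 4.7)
The argument is by induction on $n$. The base case $n=1$ is the definition of $\Ver_p$ as the semisimplification of $\Rep(\Z/p)$. For the inductive step, we promote the based-ring factorization of Proposition~\ref{next factor}, $\overline G_{n+1}\cong \overline G_n\otimes K_p$, to a tensor equivalence $\C_{n+1}\simeq \C_n\boxtimes \C_{n+1}^0$. The quotient $\Z/p^{n+1}\twoheadrightarrow \Z/p^n$ pulls back to a fully faithful symmetric tensor embedding $\C_n\hookrightarrow \C_{n+1}$ sending $v_r\mapsto v_r$. Define $\C_{n+1}^0$ as the tensor subcategory of $\C_{n+1}$ generated by $u_{2q-1}$ with $q=p^n$; by the fusion rules \eqref{mrules} its simple objects are exactly $\{u_1,u_{2q\pm1},\ldots,u_{(p-1)q\pm1}\}$, so $\Gr(\C_{n+1}^0)\cong K_p$. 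Since $\C_{n+1}$ is symmetric, tensor multiplication defines a symmetric tensor functor $F\colon \C_n\boxtimes \C_{n+1}^0\to \C_{n+1}$. The Frobenius--Perron dimensions on both sides agree, and essential surjectivity of $F$ on simples holds, both by Proposition~\ref{next factor} (which expresses every basis element of $\overline G_{n+1}$ as a product of one from $\overline G_n$ and one from $K_p$). Hence $F$ is an equivalence by \cite[Proposition 6.3.4]{EGNO}.

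It remains to identify $\C_{n+1}^0$ with the category $\D$ of the statement. By the corollary following Theorem~\ref{delta-1}, any symmetric fusion category over $\bold k$ with Grothendieck ring $K_p$ is equivalent to the neutral component of the $\Z/2$-grading on $\uRep^{ss}(O(-1),\bold k)$. For $p=2$ and $p=3$ this neutral component is computed explicitly in cases (a) and (b) preceding Theorem~\ref{delta-1}, yielding $\Vec_{\Z/2}$ and $(\Vec_{\Z/3})^{\Z/2}$ respectively. For $p>3$, the chain of equivalences $\uRep^{ss}(O(-1),\bold k)\simeq \T^{ss}(O(p-1))=\T^{ss}(SO(p-1))^{\Z/2}$ from the proof of Theorem~\ref{delta-1} and Lemma~\ref{levrank}, together with the realization of $\T^{ss}(SO(p-1))$ as the neutral $\Z/2$-graded component of $\T^{ss}(\mathrm{Spin}(p-1))=\Ver_p(\mathrm{Spin}(p-1))$, yields after extracting the neutral $\Z/2$-graded component the identification $\D\simeq (\Ver_p^+(\mathrm{Spin}(p-1)))^{\Z/2}$; the Frobenius--Perron dimensions check out because $|Z(\mathrm{Spin}(p-1))|=4$ for every admissible rank.

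The principal obstacle is the inductive categorical lifting step: exhibiting $\C_n$ and $\C_{n+1}^0$ as commuting tensor subcategories of $\C_{n+1}$ whose external product exhausts the target. All the necessary ring-level identities are supplied by Green's formulas \eqref{so3} and \eqref{bar}, which underlie the fusion rules \eqref{mrules}, and the categorical conclusion then follows from Frobenius--Perron dimension counting together with the uniqueness of categorification encoded in Theorem~\ref{delta-1}.
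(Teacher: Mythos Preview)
Your proof is correct and follows exactly the paper's approach. The paper asserts without proof that Proposition~\ref{next factor} yields the categorical splitting $\C_{n+1}\simeq \C_n\boxtimes \C_{n+1}^0$ and then invokes the uniqueness corollary for categorifications of $K_p$; you supply precisely the details it leaves implicit (the FPdim/surjectivity argument via \cite[Proposition~6.3.4]{EGNO}, and the unwinding of $\D$ for $p>3$ through the $\mathrm{Spin}(p-1)\to SO(p-1)\to O(p-1)$ tower from the proofs of Theorem~\ref{delta-1} and Lemma~\ref{levrank}).
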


\begin{remark}The same result is valid for the representation category 
of the Hopf algebra $\bold k[x]/(x^{p^n})$ where $x$ is a primitive element, and 
more generally for any cocommutative Hopf algebra structure on 
$\bold k[x]/(x^{p^n})$ (there are many such structures related to truncations of 1-dimensional formal group laws). Indeed, the analysis of the Green ring in Sections 5.2, 5.4 and 5.5 of \cite{B1} reproduced in this section works for any cocommutative coproduct, so the Grothendieck ring of the semisimplification is independent on the Hopf structure. But we show that
in this case the Grothendieck ring determines the category. 
\end{remark}

\end{document}